\newtheorem{theorem}{Theorem}[section]
\newtheorem{lemma}[theorem]{Lemma}
\newtheorem{proposition}[theorem]{Proposition}
\newtheorem{definition}[theorem]{Definition}
\newtheorem{conjecture}[theorem]{Conjecture}
 \newcommand{\N}{\mathcal N}
 \newcommand{\E}{\mathbb E}
\newcommand{\Hhom}{\mathcal{F}_{hom}}
\newcommand{\T}{\mathcal{T}}
\newcommand{\FF}{\mathcal{F}}
\newcommand{\ex}{\mathrm{ex}}
\begin{document} 
\title{Additive Approximation of Generalized Tur\'an Questions
\author{ Noga Alon\thanks{Department of Mathematics, Princeton University, Princeton,
		New Jersey, USA and Schools of Mathematics and Computer Science,
		Tel Aviv University, Tel Aviv, Israel. Email: {
			nalon@math.princeton.edu}.
		Research supported in part by ISF grant No. 281/17, GIF
		grant No. G-1347-304.6/2016 and the Simons Foundation. }
	 \and Clara Shikhelman \thanks{Sackler School of Mathematics,
		Tel Aviv University, Tel Aviv 69978, Israel. Email:
		clarashk@mail.tau.ac.il. Research supported in part by an ISF grant. }
}}
\setlength{\parskip}{1ex plus 0.5ex minus 0.2ex}

\maketitle 
\begin{abstract} For graphs $G$ and $T$, and a family of graphs $\FF$ let $\ex(G,T,\FF)$ denote the maximum possible number of copies of $T$ in an $\FF$-free subgraph of $G$. We investigate the algorithmic aspects of calculating and estimating this function.
We show that for every graph $T$, finite family $\FF$ and constant $\epsilon>0$  there is a polynomial time algorithm that approximates $\ex(G,T,\FF)$ for an input graph $G$ on $n$ vertices up to an additive error of $\epsilon n^{v(T)}$.
 We also consider the possibility of a better approximation, proving several positive and negative results, and suggesting a conjecture on the exact relation between $T$ and $\FF$ for which no significantly better approximation can be found in polynomial time unless $P=NP$.
%
\end{abstract}

\section{Introduction}

Many natural computational problems can be formulated as \textit{graph modification problems}. In these we are given an input graph $G$ and we aim to apply the smallest number of modifications and  get a graph which has some predefined property. 
Both the allowed modifications and the desired properties vary. The  most common modifications are 
adding, deleting, or editing edges or vertices. 
As for the desired properties, these are usually either graph properties coming from classical graph theory or properties motivated by real world applications such as Molecular
Biology \cite{CMNR}, \cite{GGKSh}, \cite{GKSh}, Circuit Design \cite{EC} or Machine Learning \cite{BBC}.




Garey and Johnson \cite{GJ} considered 18 edge and vertex modification problems. 
 Yannakakis \cite{Y81} proved that such modification problems are NP-hard for properties such as outerplanar and transitively orientable, Asano \cite{As}, and Asano and Hirata \cite{AsH} established NP-hardness for several properties expressible through forbidding families of minors or topological minors, El-Mallah and Colbourn \cite{EC} proved NP-hardness for properties defined by forbidden minors and induced subgraphs. In \cite{Y81} Yannakakis posed the question of proving NP-hardness not only for specific properties, but for general families of properties.

 In  \cite{NShSh}  Natanzon, Shamir and Sharan studied edge modification problems for hereditary properties such as being Perfect.
They showed that not only are these problems NP-hard, even finding an approximate answer, up to some constant multiplicative factor, is NP-hard. Other works have also investigated the question of approximation, see for example \cite{KR} and \cite{C}. 

In \cite{ASS} Alon, Shapira and Sudakov investigated this question for the general family of monotone graph properties\footnote{Monotone graph properties are properties that are closed under edge and vertex deletion, for example being $K_3$ free or being planar.}. The only relevant edge-modification for such properties is edge deletion. Note that any monotone property can be defined as the property of a graph being $\FF$-free where $\FF$ is an appropriate (finite or infinite) family of graphs. Thus the question becomes the following: for a graph $G$, let $\ex(G,K_2,\FF)$ denote the maximum number of edges in a subgraph of $G$ that contains no copy of $F\in\FF$. The following theorem shows that this value can be approximated as follows. 
%

%
%
%

%
\begin{theorem}[\cite{ASS}]\label{thm:ASS1}
	For any fixed $\epsilon> 0$ and any family of graphs $\mathcal{F}$ there is a deterministic algorithm that given a graph $G$ on $n$ vertices computes $\ex(G,K_2,\FF)$ up to additive error of $\epsilon n^2$ in time $O(n^2)$.

\end{theorem}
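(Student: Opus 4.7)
The plan is to reduce the computation of $\ex(G,K_2,\FF)$ to approximating the maximum number of edges $\mathrm{mc}_p(G)$ in a $p$-partite subgraph of $G$, where $p = \chi(\FF) - 1$ and $\chi(\FF) := \min_{F \in \FF} \chi(F)$. If some $F \in \FF$ is bipartite, then by the K\H{o}v\'ari--S\'os--Tur\'an theorem $\ex(G,K_2,\FF) = o(n^2)$ and the algorithm simply returns $0$ with error absorbed in $\epsilon n^2$; otherwise $p \geq 2$.

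The structural identity on which the algorithm rests is
\[
\ex(G,K_2,\FF) = \mathrm{mc}_p(G) + o_{\epsilon,\FF}(n^2).
\]
The lower bound is immediate: any $p$-partite subgraph of $G$ is $K_{p+1}$-free, hence $\FF$-free. For the upper bound, let $H \subseteq G$ be an optimal $\FF$-free subgraph. Apply Szemer\'edi's regularity lemma to $H$ with a parameter $\delta$ chosen small in terms of $\epsilon$ and $\FF$, and discard every edge of $H$ lying in a pair that is not $\delta$-regular or has density below $\delta$; this removes at most $\epsilon n^2/4$ edges. The resulting reduced graph on the clusters must be $K_{p+1}$-free: fix once and for all an $F_0 \in \FF$ with $\chi(F_0) = p+1$ (which exists by the definition of $p$); since $F_0$ embeds into a blow-up of $K_{p+1}$, the standard counting/embedding lemma would otherwise produce a copy of $F_0$ in $H$, contradicting $\FF$-freeness. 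By Tur\'an's theorem the reduced graph is $p$-partite, and pulling the partition back to $V(G)$ exhibits $H$ as within $o(n^2)$ edges of a $p$-partite subgraph of $G$, so $e(H) \leq \mathrm{mc}_p(G) + o(n^2)$.

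Given this reduction, it suffices to approximate $\mathrm{mc}_p(G)$ up to additive error $\epsilon n^2/2$ in time $O(n^2)$. Such an algorithm is classical: it can be obtained from the Frieze--Kannan weak regularity lemma together with the associated PTAS for dense MAX-$p$-CUT, or equivalently from the constant-time sampling scheme of Goldreich--Goldwasser--Ron coupled with brute force on the sampled subgraph and a renormalization step. In either case the running time is linear in the input size, matching the stated $O(n^2)$ bound.

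The main technical obstacle is the upper bound step when $\FF$ is infinite, because the counting lemma produces a copy of a specific target graph rather than of some unspecified member of $\FF$. The workaround is to commit in advance to one witness $F_0 \in \FF$ of minimum chromatic number and to run the counting lemma with $F_0$ (rather than $K_{p+1}$) as the target; this collapses the analysis to the familiar finite-family case and simultaneously fixes all quantitative parameters ($\delta$, the cluster count, etc.) in terms of $\epsilon$, $F_0$, and $p$ alone.
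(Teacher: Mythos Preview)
Your structural identity $\ex(G,K_2,\FF)=\mathrm{mc}_p(G)+o(n^2)$ is false, and the error is the sentence ``By Tur\'an's theorem the reduced graph is $p$-partite.'' Tur\'an's theorem bounds the \emph{edge count} of a $K_{p+1}$-free graph; it does not assert that such a graph is $p$-colourable. Concretely, take $\FF=\{K_3\}$ (so $p=2$) and let $G$ be the balanced blow-up of $C_5$ on $n$ vertices. Then $G$ is triangle-free, so $\ex(G,K_2,K_3)=e(G)=n^2/5$, whereas a short optimisation shows $\mathrm{mc}_2(G)=4n^2/25$; the gap is $n^2/25$, not $o(n^2)$. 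The reduced graph of an $\FF$-free $H$ can perfectly well be $C_5$ itself, which is $K_3$-free but not bipartite, so ``pulling the partition back'' does not produce a $p$-partite subgraph of $G$.

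The paper does not prove this theorem --- it is quoted from \cite{ASS} --- but its proof of the generalisation (Theorem~\ref{thm:algEx}) specialises to $T=K_2$ and shows what is actually needed. One does \emph{not} reduce to MAX-$p$-CUT; instead one takes the $(\epsilon,d)$-partition graph $W$ and solves $\ex_{\hom}(W,K_2,\FF)$ by brute force over all $2^{\binom{k}{2}}$ conventional subgraphs of $W$. The point of Lemma~\ref{lem:mainLemma} is precisely that the optimal $\FF$-free subgraph of $G$ is tracked, up to $o(n^2)$ edges, by the optimal $\FF_{\hom}$-free conventional subgraph of $W$ --- a statement that is genuinely stronger than ``the reduced graph is $K_{p+1}$-free'' and that your argument never establishes. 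If you want to rescue a MAX-CUT-style reduction, you would need to replace $\mathrm{mc}_p(G)$ by a maximum over all $\FF_{\hom}$-free ``templates'' on $O_\epsilon(1)$ parts, which is exactly what the partition-graph formulation does.
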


A complimentary theorem shows that excluding simple cases, a significantly better approximation is NP-hard

\begin{theorem}[\cite{ASS}]\label{thm:ASS2}
	Let $\mathcal{F}$ be a family of graphs. Then,
	
	\begin{enumerate}
		\item If there is a bipartite graph in $\mathcal{F}$, then there is a fixed $\delta > 0$ for which it is
		possible to approximate $\ex(G,K_2,\FF)$ within an additive error of $n^{2-\delta}$ in polynomial time.
		\item  On the other hand, if there are no bipartite graphs in $\mathcal{F}$, then for any fixed $\delta > 0$ it is NP-hard to
		approximate $\ex(G,K_2,\FF)$ within an additive error of $n^{2-\delta}$.
	\end{enumerate}
\end{theorem}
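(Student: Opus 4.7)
For Part 1, the plan is to invoke the K\H{o}v\'ari--S\'os--Tur\'an theorem. If $\FF$ contains a bipartite graph $H$ whose smaller side has $a$ vertices, then $\ex(n,\FF) \le \ex(n,H) \le \ex(n,K_{a,a}) = O(n^{2-1/a})$, so $\ex(G,K_2,\FF) = O(n^{2-1/a})$ for every $n$-vertex graph $G$. Fixing any $0 < \delta < 1/a$, the trivial polynomial-time algorithm that simply returns $0$ approximates $\ex(G,K_2,\FF)$ within additive error $n^{2-\delta}$ for all sufficiently large $n$; the finitely many smaller instances are handled by brute force.

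For Part 2, assume $\FF$ has no bipartite member, and set $r+1 := \min_{F \in \FF} \chi(F) \ge 3$. Then the Tur\'an graph $T(n,r)$ is $\FF$-free, and Erd\H{o}s--Stone--Simonovits gives $\ex(n,\FF) = (1-1/r+o(1))\binom{n}{2} = \Theta(n^2)$. The plan is to reduce from an NP-hard gap problem -- most naturally H\aa stad's hardness of approximating the maximum independent set $\alpha(H)$ within factor $m^{1-\epsilon}$ -- to the gap-decision version of our problem (distinguish $\ex(G,K_2,\FF) \ge \theta$ from $\ex(G,K_2,\FF) \le \theta - N^{2-\delta}$).

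The construction I would use attaches the input to a Tur\'an scaffold. Given an $m$-vertex graph $H$, form $G$ on $N = \mathrm{poly}(m)$ vertices by taking a balanced $r$-partite Tur\'an graph $T(N,r)$ and embedding a scaled blow-up of $H$ inside one of its parts. The scaffold contributes the baseline $(1-1/r+o(1))\binom{N}{2}$ to $\ex(G,K_2,\FF)$, while the number of edges that can additionally survive inside the $H$-part of any $\FF$-free subgraph (after paying for the scaffold edges that must be removed to destroy the forbidden structures these $H$-edges create with the other parts) is, up to lower-order terms, governed by $\alpha(H)$. Parameters are chosen so that the extra contribution is of order $N^{2-\delta'}$ with $\delta' < \delta$.

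The core of the argument, and the main obstacle, is the corresponding stability step: any $\FF$-free subgraph of $G$ achieving close to the extremal number of edges must -- by the Erd\H{o}s--Simonovits stability theorem applied simultaneously to every $F \in \FF$, for which finiteness of $\FF$ is essential -- be $o(1)$-close in edit distance to $T(N,r)$, so the edges it retains inside the $H$-part necessarily form an independent set in $H$. Consequently, an additive $N^{2-\delta}$ approximation to $\ex(G,K_2,\FF)$ yields a polynomial-factor approximation to $\alpha(H)$, contradicting H\aa stad's theorem. Separate care is required when $r=2$, as the scaffold becomes bipartite and a slightly modified construction (or a direct reduction from Max Clique with blow-up amplification) is needed to reach the full $N^{2-\delta}$ hardness.
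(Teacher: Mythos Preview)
This theorem is quoted from \cite{ASS}; the present paper does not supply its own proof, so there is nothing to compare against directly. Your Part~1 is correct and is exactly the argument the paper gives for the generalisation in Proposition~\ref{prop:FsubBlowUpofT}: the K\H{o}v\'ari--S\'os--Tur\'an bound forces $\ex(G,K_2,\FF)\le n^{2-\delta}$, so the constant $0$ is already a valid approximation.

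Your Part~2, however, has a genuine gap. The construction ``Tur\'an graph with a blown-up copy of $H$ planted in one part'' does not deliver the link to $\alpha(H)$ that you assert. Concretely: keeping a single edge inside one part $V_1$ together with all Tur\'an edges creates a $K_{r+1}$, but $\FF$ need not contain $K_{r+1}$ (it merely contains some $(r{+}1)$-chromatic graph), so there is no immediate forbidden subgraph, and hence no reason the surviving internal edges should be constrained by the independence structure of $H$. The stability step does not rescue this: Erd\H{o}s--Simonovits stability tells you a near-extremal $\FF$-free subgraph is $o(N^2)$-close to \emph{some} complete $r$-partite graph, not to the specific Tur\'an scaffold you started from, and it says nothing about which internal edges survive. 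The sentence ``the edges it retains inside the $H$-part necessarily form an independent set in $H$'' is therefore unjustified (and, as written, not even well-typed: edges do not form independent sets). You also flag finiteness of $\FF$ as essential, but the theorem is stated for arbitrary families.

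The actual argument in \cite{ASS} proceeds quite differently. One first shows that computing $\ex(G,K_2,\FF)$ \emph{exactly} is NP-hard (via a chromatic-number/colouring reduction exploiting $\min_{F\in\FF}\chi(F)=r+1\ge 3$), and then amplifies the gap by replacing $G$ with its $M$-blow-up $G^{(M)}$: one proves $\ex(G^{(M)},K_2,\FF)=M^2\,\ex(G,K_2,\FF)$, so an additive $N^{2-\delta}$ approximation on the $N=Mn$-vertex blow-up, with $M$ a large power of $n$, would recover $\ex(G,K_2,\FF)$ exactly. If you want to repair your approach, the blow-up amplification step is the piece you are missing; the H\aa stad/stability route you sketch would require a substantially more careful construction and analysis than what you have written.
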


Note that if $G=K_n$, the complete graph on $n$ vertices, then finding $\ex(K_n,K_2,\FF)$ is the classical Tur\'an question. This question and its many variations are in the heart of extremal graph theory. Recently in \cite{ASh} the systematic study of the following variation was initiated 

%

\begin{definition}
	For graphs $G$ and $T$ and a family of graphs $\FF$ let $\ex(G,T,\FF)$ denote the maximum number of copies of $T$ in an $\FF	$-free subgraph of $G$.
\end{definition}


Following the spirit of Theorems \ref{thm:ASS1} and \ref{thm:ASS2} and the above generalization of the classical Tur\'an theorem, we consider the following question. Given a forbidden family of graphs $\FF$, a graph $T$, and a constant $\epsilon>0$, is there a polynomial time algorithm that given a graph $G$ determines $\ex(G,T,\FF)$ up to an additive error of $\epsilon n^{v(T)}$? If so, is the problem of finding a significantly better approximation NP-hard? 

We answer the first question by proving the following

\begin{theorem}\label{thm:algEx}
	For  every constant $\epsilon>0$, finite family of forbidden graphs $\FF$ and  fixed graph $T$, there is a deterministic polynomial time algorithm that given a graph $G$ on $n$ vertices approximates $\ex(G,T,\FF)$ within an additive error of $\epsilon n^{v(T)}$.
\end{theorem}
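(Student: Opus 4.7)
The plan is to combine Szemer\'edi's regularity lemma with the graph removal lemma, reducing the question to a constant-size optimization problem on the reduced graph of a regular partition of $G$. First, choose an auxiliary parameter $\gamma=\gamma(\epsilon,T,\FF)>0$ small enough that each of the counting- and removal-lemma error terms below will be at most $\epsilon n^{v(T)}/4$. Apply the deterministic algorithmic version of the regularity lemma (Alon--Duke--Lefmann--R\"odl--Yuster) to $G$ with parameter $\gamma$, obtaining in time polynomial in $n$ an equitable partition $V_1,\ldots,V_k$ with $k=k(\gamma)=O(1)$, in which all but at most $\gamma\binom{k}{2}$ pairs are $\gamma$-regular, and record the densities $d_{ij}=d(V_i,V_j)$.

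Next, discretize. For every ordered pair $(i,j)$ let $w_{ij}$ range over $\{0,\gamma,2\gamma,\ldots\}\cap[0,d_{ij}]$, and view each vector $w=(w_{ij})$ as describing a candidate weighted subgraph of the reduced graph. Since $k$ is constant, there are only $O_\gamma(1)$ such vectors. For each vector $w$ and each fixed graph $H$ (namely $T$ and the members of $\FF$) the standard counting lemma lets us evaluate, up to an additive error $\gamma n^{v(H)}$, the number of copies of $H$ in any blow-up of the reduced graph whose pair $(V_i,V_j)$ is replaced by a bipartite graph of density $w_{ij}$. Call $w$ \emph{admissible} if this estimated count is at most $\gamma n^{v(F)}$ for every $F\in\FF$, and output the maximum, over admissible $w$, of the estimated count of $T$.

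Correctness has two directions. For the lower bound on $\ex(G,T,\FF)$, given the admissible $w$ selected by the algorithm, use the deterministic regularity lemma to construct in polynomial time a subgraph of $G$ whose density on each regular pair $(V_i,V_j)$ is within $\gamma$ of $w_{ij}$; the counting lemma then bounds the copies of each $F\in\FF$ by $O(\gamma)\cdot n^{v(F)}$, and the graph removal lemma lets us delete $o(n^2)$ edges to remove all copies of all $F\in\FF$, destroying only $o(n^{v(T)})$ copies of $T$ in the process. For the upper bound, take an extremal $\FF$-free subgraph $G^*\subseteq G$ and let $w^*_{ij}$ be the density of $G^*$ on $(V_i,V_j)$ rounded down to the nearest multiple of $\gamma$. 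Because $G^*$ is $\FF$-free the counting lemma forces $w^*$ to be admissible up to the regularity error, and the same lemma shows that the number of copies of $T$ in $G^*$ differs by at most $\epsilon n^{v(T)}/2$ from the count associated with $w^*$, which is in turn at most the algorithm's output. Contributions from the irregular pairs and from edges within classes are bounded by $O(\gamma n^{v(T)})$ and are absorbed into the error budget.

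The main obstacle is the quantitative interplay among the three parameters: one must choose $\gamma$ small enough that the removal-lemma function $\delta(\gamma,\FF)\to 0$ is negligible compared to $\epsilon$, yet the resulting partition size $k(\gamma)$ is still independent of $n$ so that enumerating the discretized vectors $w$ and evaluating the counting-lemma estimates remain polynomial in $n$. Since $\FF$ is finite and $T$ is fixed, such a choice of $\gamma$ exists, and once it is made a routine accounting of the accumulated errors yields the claimed additive approximation within $\epsilon n^{v(T)}$.
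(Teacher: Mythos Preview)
Your argument has a genuine gap in the upper-bound direction. You take an extremal $\FF$-free subgraph $G^*\subseteq G$, read off its pairwise densities $w^*_{ij}$ with respect to the partition $\{V_i\}$, and then invoke the counting lemma to conclude both that $w^*$ is admissible and that the $T$-count associated with $w^*$ is close to $\N(G^*,T)$. But the partition $\{V_i\}$ is $\gamma$-regular for $G$, not for $G^*$: passing to an arbitrary subgraph can destroy regularity completely. Without regularity of the pairs in $G^*$, the counting lemma gives you no control in either direction --- the product-of-densities estimate for $F$ with weights $w^*$ need not be small just because $G^*$ is $\FF$-free, and the product-of-densities estimate for $T$ need not approximate $\N(G^*,T)$. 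So neither the admissibility of $w^*$ nor the closeness of its $T$-estimate to $\N(G^*,T)$ is justified.

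This is exactly the difficulty the paper's Main Lemma is designed to overcome. There one starts from a \emph{strong} ($f$-regular) partition $P$ of $G$ in which every pair is $f(k)$-regular, then \emph{refines} $P$ to an $\epsilon$-regular partition $Q$ of the extremal subgraph $G_{\ex}$. The counting lemma now applies legitimately to $G_{\ex}$ with respect to $Q$. A random selection of one refined part from each original part produces a weighted graph $W^{**}$ on $V(W)$ whose edge pattern is $\mathcal{F}_{hom}$-free (since $G_{\ex}$ is $\FF$-free) and whose densities are, by the strong regularity of $P$, within $\epsilon$ of the corresponding densities in $W$; this yields a conventional $\mathcal{F}_{hom}$-free subgraph of $W$ with near-maximal $T$-weight. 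The refinement step and the use of an $f$-regular (rather than merely $\epsilon$-regular) partition are the missing ingredients in your sketch; the removal-lemma idea you use for the lower bound is replaced in the paper by the observation that if the reduced graph is $\mathcal{F}_{hom}$-free then the associated blow-up subgraph $G_{W_0}$ is already $\FF$-free, so no cleanup is needed.
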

\noindent The proof is based on variants of Szemer\'edi's regularity lemma, extending the methods in \cite{ASS}.

As for the question of better approximation, the natural generalization of the easy part of Theorem \ref{thm:ASS2} holds also in our case. For a fixed graph $T$, an $m$ blow-up of $T$ is the graph obtained by replacing each vertex of $T$ by an independent set of size $m$ and each edge by a complete bipartite graph between the corresponding independent sets.

\begin{proposition}\label{prop:FsubBlowUpofT}
	Let $T$ be a fixed graph and let $\FF$ be a family of graphs such that there is a graph $F\in \FF$ which is a subgraph of a blowup of $T$. Then there is a fixed $\epsilon:=\epsilon(T,\FF)>0$ such that $\ex(G,T,\FF)$ can be calculated in polynomial time up to additive error of $n^{v(T)-\epsilon}$.
\end{proposition}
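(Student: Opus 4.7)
The plan is to reduce the proposition to a Tur\'an-type bound, namely that
\[
\ex(K_n,T,F) \le C n^{v(T)-\epsilon_0}
\]
for some $\epsilon_0>0$ depending only on $T$ and a fixed $F\in\FF$ that sits inside a blowup of $T$. Granted this bound, the algorithm is vacuous: output $0$. Since every $\FF$-free subgraph of $G$ is in particular $F$-free,
\[
\ex(G,T,\FF)\le\ex(K_n,T,F)\le C n^{v(T)-\epsilon_0},
\]
so taking $\epsilon:=\epsilon_0/2$ makes the additive error at most $n^{v(T)-\epsilon}$ for all sufficiently large $n$; small $n$ are handled by brute force in constant time.

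To obtain the Tur\'an bound, fix $m$ with $F\subseteq T[m]$, set $t:=v(T)$, write $V(T)=\{v_1,\dots,v_t\}$, and let $H$ be an $F$-free graph on $n$ vertices. I would form the auxiliary $t$-partite, $t$-uniform hypergraph $\HY$ on vertex set $V_1\sqcup\cdots\sqcup V_t$, where each $V_i$ is a disjoint copy of $V(H)$, and declare $(u_1,\dots,u_t)$ with $u_i\in V_i$ to be a hyperedge precisely when $v_i\mapsto u_i$ is an injective embedding of $T$ into $H$. Up to the factor $|\mathrm{Aut}(T)|$, the hyperedges of $\HY$ are in bijection with the copies of $T$ in $H$, so bounding the number of copies reduces to bounding $|E(\HY)|$.

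The key external tool is Erd\H{o}s's theorem on the Tur\'an number of complete multipartite uniform hypergraphs: there is a constant $C=C(t,s)$ such that any $t$-partite $t$-uniform hypergraph with parts of size $n$ and more than $Cn^{t-1/s^{t-1}}$ hyperedges contains a $K^{(t)}_{s,\dots,s}$, i.e.\ sets $W_i\subseteq V_i$ of size $s$ whose transversals are all hyperedges. I would apply this with $s:=tm$. Interpreted back inside $V(H)$, the $W_i$ may overlap, but a greedy selection (using $|W_i|=tm\ge m+(t-1)m$) yields pairwise disjoint $W_i'\subseteq W_i$ of size $m$; every transversal from $W_1'\times\cdots\times W_t'$ is still a hyperedge, so $H$ contains $T[m]$ as a subgraph and hence a copy of $F$, contradicting $F$-freeness. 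Consequently $|E(\HY)|\le Cn^{t-1/(tm)^{t-1}}$, which yields the desired estimate with $\epsilon_0:=1/(tm)^{t-1}$.

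The main technical delicacy is ensuring that the $t$ parts of the blowup end up pairwise disjoint inside $V(H)$, rather than merely inside the auxiliary hypergraph $\HY$; this is handled by the greedy trimming above at the cost of inflating the parameter $m$ by a constant factor. Everything else is a direct combination of the reduction to a single forbidden graph $F\subseteq T[m]$, the Erd\H{o}s hypergraph Tur\'an bound, and the observation that an algorithm need not be clever when the quantity it estimates is already $o(n^{v(T)})$ with a polynomial saving.
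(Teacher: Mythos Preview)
Your proof is correct and follows the paper's strategy exactly: the paper also observes that $\ex(G,T,\FF)\le\ex(K_n,T,F)\le n^{v(T)-\epsilon}$ and that outputting $0$ is therefore a valid approximation, the only difference being that the paper cites the Tur\'an bound from \cite{ASh} as a black box while you reprove it via Erd\H{o}s's hypergraph theorem. As a small simplification, your greedy trimming is unnecessary: since hyperedges of $\HY$ are defined to be \emph{injective} maps into $V(H)$, any copy of $K^{(t)}_{s,\dots,s}$ in $\HY$ automatically has pairwise disjoint projections to $V(H)$ (a shared vertex would give a non-injective transversal, which is not a hyperedge), so $s=m$ already suffices.
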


The above is a straightforward application of the following simple proposition from \cite{ASh}.
\begin{proposition}[\cite{ASh}]
	Let $T$ be a fixed graph. Then $\ex(n,T,F) =\Omega(n^{v(T)})$ if and only if $F$ is not a subgraph of
	a blow-up of $T$. Otherwise, $\ex(n, T,F) \leq n^{v(T)-\epsilon}$ for
	some $\epsilon:=\epsilon(T,F) >0$
\end{proposition}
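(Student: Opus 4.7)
Let $r=v(T)$ with vertices labelled $t_1,\ldots,t_r$. The plan is to prove the two directions separately; each rests on a single clean idea.

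For the lower bound, assume $F$ is not a subgraph of any blow-up of $T$. Take $G$ to be the balanced $m$-blow-up of $T$ with $m=\lfloor n/r\rfloor$: replace each $t_i$ by an independent set $V_i$ of size $m$ and each edge $t_it_j$ of $T$ by a complete bipartite graph between $V_i$ and $V_j$. Then $G$ is itself a blow-up of $T$, so by hypothesis it contains no copy of $F$, and every transversal $(v_1,\ldots,v_r)\in V_1\times\cdots\times V_r$ induces a copy of $T$. This yields at least $m^r/|\mathrm{Aut}(T)|=\Omega(n^r)$ copies of $T$, so $\ex(n,T,F)=\Omega(n^{v(T)})$.

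For the upper bound, assume $F\subseteq T[m]$ for some integer $m\geq 1$, where $T[m]$ denotes the $m$-blow-up of $T$. Let $G$ be any $n$-vertex $F$-free graph. I would introduce an auxiliary $r$-partite $r$-uniform hypergraph $H$ on $r$ disjoint copies of $V(G)$ as follows: a tuple $(v_1,\ldots,v_r)$ with $v_i$ in the $i$-th copy is declared a hyperedge precisely when the map $t_i\mapsto v_i$ is an embedding of $T$ into $G$. The key structural observation is that a copy of the complete $r$-partite $r$-uniform hypergraph $K^{(r)}_{m,\ldots,m}$ inside $H$ corresponds exactly to an $m$-blow-up of $T$ sitting inside $G$: the defining condition that every transversal through the $m$ chosen vertices in each part is a hyperedge translates, edge by edge, into the condition that the selected vertex sets form the parts of $T[m]\subseteq G$.

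Since $G$ is $F$-free and $F\subseteq T[m]$, $G$ contains no copy of $T[m]$, hence $H$ is $K^{(r)}_{m,\ldots,m}$-free. Applying the Erdős hypergraph extension of the K\H ov\'ari--S\'os--Tur\'an theorem to $H$ gives $|E(H)|\leq C\, n^{r-1/m^{r-1}}$ for a constant $C=C(r,m)$. The number of labeled copies of $T$ in $G$ is at most $|E(H)|$, and the number of unlabeled copies is off by at most a factor of $|\mathrm{Aut}(T)|$, so $\ex(n,T,F)\leq C'\, n^{r-1/m^{r-1}}=n^{v(T)-\epsilon}$ with $\epsilon=1/m^{r-1}$, as required.

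The main content of the argument is the translation between copies of $T[m]$ in $G$ and complete $r$-partite sub-hypergraphs of the auxiliary hypergraph $H$; once this encoding is in place, the upper bound is an immediate consequence of a classical Zarankiewicz-type estimate, and the lower bound is witnessed by the tautological example of the balanced blow-up of $T$ itself. No serious obstacle arises: the only quantitative ingredient, the hypergraph K\H ov\'ari--S\'os--Tur\'an bound, is standard, and determines the value of $\epsilon$ as an explicit function of $m$ and $v(T)$.
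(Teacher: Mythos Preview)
The paper does not itself prove this proposition; it is quoted from \cite{ASh} and used as a black box to derive Proposition~\ref{prop:FsubBlowUpofT}. Your argument is correct and is essentially the proof given in \cite{ASh}: the lower bound via the balanced blow-up of $T$, and the upper bound by encoding embeddings of $T$ as hyperedges of an $r$-partite $r$-uniform hypergraph and invoking the Erd\H{o}s extension of K\H{o}v\'ari--S\'os--Tur\'an.

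One point you pass over quickly but which does work out: when a copy of $K^{(r)}_{m,\ldots,m}$ appears in $H$ with parts $A_1,\ldots,A_r$, the underlying vertex sets in $V(G)$ are automatically pairwise disjoint. Indeed, if some $v$ lay in both $A_i$ and $A_j$, the transversal choosing $v$ in both coordinates would have to be a hyperedge, contradicting the injectivity built into your definition of $H$. This is what guarantees that you really obtain $T[m]\subseteq G$ rather than merely a homomorphic image of it, and hence a genuine copy of $F$. With that detail noted, the value $\epsilon=1/m^{r-1}$ you obtain is exactly the one the original argument yields.
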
 
If indeed $F$ is a subgraph of a blow up of $T$, then for $\epsilon:=\epsilon(T.F)>0$, if $n$ is large enough we have that \[0\leq \ex(G,T,F)\leq \ex(K_n,T,F)\leq n^{v(T)-\epsilon}+0.\]
Thus $0$ is a trivial approximation of $\ex(G,T,F)$ up to additive error of $ n^{v(T)-\epsilon}$. For any family of graphs $\FF$, if $F\in \FF$ then $\ex(G,T,\FF)\leq \ex(G,T,F)$, and the required result follows.

As for the extension of the second part of Theorem \ref{thm:ASS2} we prove the following special case

\begin{theorem}\label{thm:kmBigDiff}
		Let $k,m\geq2$ be integers such that $k\geq m+2$, then for every $\epsilon>0$ approximating $\ex(G,K_m,K_k)$ up to additive error of $n^{m-\epsilon}$ is NP-hard.
	\end{theorem}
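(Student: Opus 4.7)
The plan is to reduce from the hardness of additively approximating $\ex(H,K_2,K_{k-m+2})$ within $N^{2-\delta}$, which is Theorem~\ref{thm:ASS2} part~2 applied to $\FF=\{K_{k-m+2}\}$: the hypothesis $k\ge m+2$ forces $k-m+2\ge 4$, so $K_{k-m+2}$ is non-bipartite. (The case $m=2$ is just that statement, so assume $m\ge 3$.) Given $H$ on $N$ vertices and $\epsilon>0$, I would pick a constant $c\in(1,\,2/(2-\epsilon))$ (say $c=1+\epsilon/3$), set $t:=\lceil N^{c}\rceil$, and let $G:=H\vee K_{m-2}[t]$, where $K_{m-2}[t]$ denotes the complete $(m-2)$-partite graph whose sides are independent sets of size $t$. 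Then $n:=|V(G)|=\Theta(N^{c})$, and the reduction is polynomial.

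The key estimate is the sandwich
\[
t^{m-2}\,\ex(H,K_2,K_{k-m+2})\ \le\ \ex(G,K_m,K_k)\ \le\ t^{m-2}\,\ex(H,K_2,K_{k-m+2})+O(t^{m-3}N^{3}).
\]
For the lower bound, take an optimal $K_{k-m+2}$-free $H'^{*}\subseteq H$ and let $G'^{*}$ retain all edges incident to $K_{m-2}[t]$ together with the edges of $H'^{*}$. Any clique of $G'^{*}$ meets each part of $K_{m-2}[t]$ in at most one vertex, so a $K_k\subseteq G'^{*}$ would force a $K_{k-m+2}$ inside $H'^{*}$, which does not exist; already the copies of $K_m$ that use one vertex from each of the $m-2$ parts together with an edge of $H'^{*}$ yield $t^{m-2}e(H'^{*})$ distinct $K_m$'s.

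The delicate step, and the main obstacle, is the matching upper bound for an \emph{arbitrary} $K_k$-free $G'\subseteq G$, since the optimum need not retain all apex-incident edges. To handle this I would classify each $K_m\subseteq G'$ by the set $J\subseteq[m-2]$ of parts of $K_{m-2}[t]$ that it meets, $j=|J|\le m-2$. For each clique transversal $T$ of $J$ in $G'$, set $H_T:=\bigl(\bigcap_{a\in T}N_{G'}(a)\bigr)\cap V(H)$. If $G'[H_T]$ contained a $K_{k-j}$, then together with $T$ it would form a $K_k\subseteq G'$; hence $G'[H_T]$ is $K_{k-j}$-free. The number of $K_m$'s of type $J$ through $T$ equals the number of $(m-j)$-cliques in $G'[H_T]$. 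For $j=m-2$ this is bounded by $\ex(H,K_2,K_{k-m+2})$ since then $G'[H_T]$ is a $K_{k-m+2}$-free subgraph of $H$, and summing over the at most $t^{m-2}$ transversals bounds the $j=m-2$ contribution by $t^{m-2}\ex(H,K_2,K_{k-m+2})$; for $j<m-2$, each term is at most $\binom{m-2}{j}t^{j}\binom{N}{m-j}$, and these sum to $O(t^{m-3}N^{3})$.

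Given the sandwich, an additive $n^{m-\epsilon}$-approximation of $\ex(G,K_m,K_k)$ would yield, after subtracting the $O(t^{m-3}N^{3})$ slack and dividing by $t^{m-2}$, an additive $\bigl(N^{c(2-\epsilon)}+O(N^{3-c})\bigr)$-approximation of $\ex(H,K_2,K_{k-m+2})$. Both exponents lie strictly below $2$ by the choice of $c$, so this is an $N^{2-\delta'}$-approximation for some $\delta'=\delta'(\epsilon)>0$, contradicting Theorem~\ref{thm:ASS2}.
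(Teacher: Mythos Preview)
Your argument is correct and genuinely different from the paper's. Both proofs join the input graph to a complete multipartite graph and reduce to the $T=K_2$ case of Theorem~\ref{thm:ASS2}, but the parameters and the style of argument differ. The paper always reduces from $\ex(\cdot,K_2,K_3)$, adds $k-3$ independent sets each of size $cn$ (linear in $n$), and then spends most of the work proving that the \emph{optimal} $K_k$-free subgraph of $G^+$ must keep every outer edge and induce a triangle-free graph on $V$; this yields an exact identity relating $\overline\ex(G^+,K_m,K_k)$ to $\overline\ex(G,K_2,K_3)$. You instead reduce from $\ex(\cdot,K_2,K_{k-m+2})$, add only $m-2$ parts but of super-linear size $t\approx N^c$, and replace the structural analysis by the clean sandwich coming from the common-neighbourhood observation: for any $K_k$-free $G'$ and any clique transversal $T$ of the parts, $G'[H_T]$ is $K_{k-m+2}$-free, so the dominant $j=m-2$ contribution is at most $t^{m-2}\ex(H,K_2,K_{k-m+2})$, while the $j<m-2$ terms are crushed by the choice $c>1$. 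This avoids the paper's case analysis entirely; the price is that you get an inequality with an $O(t^{m-3}N^3)$ slack rather than an exact formula, but that slack is harmless for the reduction. One tiny remark: your interval $(1,2/(2-\epsilon))$ tacitly assumes $\epsilon<2$; for $\epsilon\ge 2$ the claim follows immediately from the small-$\epsilon$ case since the approximation task only gets harder.
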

	
We believe that excluding the cases covered by Proposition \ref{prop:FsubBlowUpofT} no better approximation is possible, and so we suggest the following conjecture.

\begin{conjecture}\label{conj:mainConj}
	For every graph $T$, family of graphs $\FF$ such that no $F\in \FF$ is a subgraph of a blow-up of $T$, and $\epsilon>0$, it is NP-hard to approximate $\ex(G,T,\FF)$ up to additive error  of $n^{v(T)-\epsilon}$ for a given input graph $G$ on $n$ vertices.
\end{conjecture}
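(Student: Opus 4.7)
The plan is to design a polynomial-time reduction from a suitable NP-hard problem to the approximation of $\ex(G,T,\FF)$, extending the strategy behind Theorem \ref{thm:kmBigDiff} and the negative half of Theorem \ref{thm:ASS2}. The first step is to exploit the hypothesis: since no $F \in \FF$ is a subgraph of a blow-up of $T$, the proposition from \cite{ASh} quoted above guarantees that for every large $N$ there exists an $\FF$-free ``host'' graph $G^*$ on $N$ vertices containing $\Omega(N^{v(T)})$ copies of $T$. A natural concrete choice is a balanced blow-up of $T$ of order $N$: by assumption no $F \in \FF$ embeds into such a blow-up, so $G^*$ is automatically $\FF$-free, and a direct count gives $\Omega(N^{v(T)})$ copies of $T$ coming from transversal embeddings. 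This blow-up will serve as the backbone of the reduction and provides ample room in which to encode a hard instance.

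The reduction itself would proceed as follows. Given an $n$-vertex instance $H$ of a conveniently chosen hard-to-approximate problem $\Pi$ --- candidates include Max Clique, Max Independent Set, or an edge-deletion problem handled by Theorem \ref{thm:ASS2}(2) with a carefully related auxiliary family $\FF'$ --- construct a graph $G$ on $N = \Theta(n)$ vertices by embedding $H$ into one blob (or a constant number of blobs) of $G^*$ while leaving the remainder untouched. The design must ensure two things: (i) away from the planted location $G$ looks like $G^*$, so any near-optimal $\FF$-free subgraph of $G$ automatically contains essentially all of its $T$-copies there; and (ii) the number of extra $T$-copies passing through the planted blob is, up to lower-order corrections, an affine function of the objective value of $\Pi$ on $H$, with sensitivity of order $n^{v(T)-\epsilon'}$ for some $\epsilon' < \epsilon$. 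The local constraint that couples $\FF$-freeness with the hard problem should come from choosing a minimum-order $F_0 \in \FF$: because $F_0$ does not embed into any blow-up of $T$, forbidding $F_0$ translates into a nontrivial combinatorial restriction on the edges one can keep inside the planted blob, and this restriction is what encodes $\Pi$. A final blow-up amplification step can then widen the resulting gap so that an additive error of $n^{v(T)-\epsilon}$ still distinguishes YES- from NO-instances.

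The main obstacle, and the reason the statement is posed as a conjecture rather than proved, is the breadth of the claim: it asserts hardness uniformly in $T$ and in \emph{all} finite families $\FF$ satisfying the non-embeddability hypothesis. Different regimes of $(T,\FF)$ appear to require different sources of hardness --- Max Clique for clique-forbidden families, chromatic-number or SAT-based reductions when $\FF$ is controlled by a coloring obstruction, and presumably PCP-type inapproximability results to obtain the tightest additive slack $n^{v(T)-\epsilon}$ for every $\epsilon > 0$. Designing a single gadget flexible enough to handle arbitrary mixed families --- whose members may have widely differing densities, chromatic numbers, or degree sequences --- is the core difficulty. A realistic line of attack is therefore staged: first handle the case in which $T$ is an arbitrary graph but $\FF$ consists of cliques (which would already generalize Theorem \ref{thm:kmBigDiff} and eliminate its $k \geq m+2$ gap), then extend to chromatic-threshold families via a coloring-based reduction, and finally combine these ideas into a unified argument for general finite $\FF$.
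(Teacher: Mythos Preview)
The statement is a \emph{conjecture} in the paper, not a theorem; the paper explicitly says ``The full assertion of Conjecture \ref{conj:mainConj} remains open.'' So there is no proof in the paper to compare against. What the paper does contain are two partial results in this direction: Theorem \ref{thm:kmBigDiff} (the case $T=K_m$, $\FF=\{K_k\}$ with $k\ge m+2$) and Proposition \ref{prop:toSmallofAnError} (arbitrary 3-connected $T$ and 3-connected $\FF$, but only additive error $n^{t-2-\epsilon}$ rather than $n^{t-\epsilon}$).

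Your proposal is not a proof but a research outline, and you say so explicitly; that is the honest assessment. Your suggested strategy --- build a host from a blow-up of $T$, plant a hard instance locally, argue that the optimal $\FF$-free subgraph must keep the bulk of the host and that the leftover encodes the hard problem --- is in the same spirit as both partial results the paper proves. It is worth noting, however, that neither of the paper's constructions plants the hard instance ``into one blob'' of a single blow-up of $T$ as you suggest. In Theorem \ref{thm:kmBigDiff} the paper adjoins $k-3$ large independent sets to the input graph and joins them completely; in Proposition \ref{prop:toSmallofAnError} it attaches a fresh blow-up of $T$ minus an edge to \emph{every} edge of the input graph. The latter gadget is closer to your picture but uses many disjoint blow-ups rather than one, and even so only yields error $n^{t-2-\epsilon}$, precisely because each edge of the planted instance controls only $\Theta(n^{c(t-2)})$ copies of $T$ rather than $\Theta(n^{c\,t})$. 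Closing that two-exponent gap is exactly where the paper gets stuck, and your outline does not yet offer a mechanism to overcome it; the staged programme you describe (cliques, then chromatic-threshold families, then general $\FF$) is reasonable but remains a plan rather than an argument.
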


The rest of the paper is organized as follows. Section \ref{sec:Regularity} is dedicated to definitions and preliminary results, most of these concern variations of the regularity lemma.
Section \ref{sec:mainLemma} is the proof of the main lemma used to establish Theorem \ref{thm:algEx}. Sections \ref{sec:ProofOfTheorem14} and \ref{sec:ProofOfTheorem17} contain the proofs of Theorems \ref{thm:algEx} and \ref{thm:kmBigDiff} respectively, and finally Section \ref{sec:concludingRemarks} includes some further remarks concerning Conjecture \ref{conj:mainConj}  and open problems.


\section{Regularity Lemmas and Auxiliary Results}\label{sec:Regularity}

From here on let $\epsilon_1,\epsilon_2...$ and $\delta_1,\delta_2,...$ be positive constants depending on $\epsilon$, $T$ and $\FF$ and tending to zero as $\epsilon$ tends to zero. In some cases the indices refer to the lemma or theorem from which the corresponding constant arises. Additionally, let $t=v(T)$ denote the number of vertices of $T$.

\subsection{Two Versions of the Regularity Lemma}

Given a graph $G$ and two disjoint sets of vertices $A,B\subset V(G)$, let $e(A,B)$ denote the number of edges with one end point in each set and let the density of edges between the sets be defined as:
\[d(A,B)=\frac{e(A,B)}{|A|\cdot |B|}\]

\begin{definition}[$\epsilon$-Regular Pair]
	Given a graph $G$ and a pair of disjoint sets of vertices $A,B\subset V(G)$, we say that the pair $(A, B)$ is $\epsilon$-regular, if for any two subsets $A' \subseteq A$ and
	$B' \subseteq B$, such that $|A'| \geq \epsilon|A|$ and $|B'| \geq \epsilon|B|$, the inequality $|d(A', B')-d(A, B)| \leq \epsilon$ holds.
\end{definition}

\begin{definition}[$\epsilon$-Regular partition] Given a graph $G$ and $\epsilon>0$ a partition $P = \{V_i\}_{i=0}^k$ of $V(G)$ for which $|V_i|=|V_j|$ for every $i,j\geq 1$, $|V_0|<k$, and all but at most $\epsilon \binom{k}{2}$ of the pairs $(V_i, V_j )$ are $\epsilon$-regular is called an $\epsilon$-regular partition.	
\end{definition}

The following theorem is the regularity lemma of Szemer\'edi \cite{Sz}. It is convenient to use the following version which appears, for example, in \cite{FR}, Theorem 3.7.

\begin{theorem}[\cite{Sz},\cite{FR}]\label{thm:Ref}
	
	For any $\epsilon>0$ and integer $k$ there exist integers $N_{\ref{thm:Ref}}:=N(\epsilon,k)$ and $K_{\ref{thm:Ref}}:=K(\epsilon,k)$ such that the following holds. If $|V|>N_{\ref{thm:Ref}}$ then for any partition $V=V_0\cup...\cup V_k$ with $|V_0|<k$ and $|V_1|=...=|V_k|$ and any graph $G$ on the vertex set $V$ there exists a partition $V=U_0\cup ... \cup U_{k'}$ such that
	\begin{enumerate}
		\item $|U_0| < k' <K_{\ref{thm:Ref}}$.
		\item For each $U_i$ with $i\geq 1$ there is $V_j$ such that $U_i\subset V_j$.
		\item $|U_1|=...=|U_{k'}|$.
		\item $U_0\cup ... \cup U_{k'}$ is an $\epsilon$-regular partition of $G$.
	\end{enumerate}
\end{theorem}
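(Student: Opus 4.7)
The plan is to follow the standard energy-increment proof of Szemer\'edi's regularity lemma, modified so that every refinement is carried out inside the parts $V_1,\dots,V_k$ of the given initial partition. Define, for any partition $P=\{W_1,\dots,W_s\}$ of $V$ (ignoring the exceptional part), the \emph{mean-square density}
\[
q(P)=\sum_{i,j}\frac{|W_i||W_j|}{|V|^2}\,d(W_i,W_j)^2,
\]
which satisfies $0\le q(P)\le 1/2$ and is monotone nondecreasing under refinement. The key lemma (Szemer\'edi's defect form of Cauchy--Schwarz) says that if the partition is not $\epsilon$-regular, then one can refine each $W_i$ into at most $2^{s}$ pieces and obtain a new partition $P'$ with $q(P')\ge q(P)+\epsilon^{5}$.

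With this tool in hand, I would start the iteration from the given partition $P_0=\{V_1,\dots,V_k\}$ (absorbing $V_0$ into the exceptional class for counting purposes). As long as the current partition $P_t$ is not $\epsilon$-regular, apply the defect refinement to obtain $P_{t+1}$; since each refinement only subdivides existing parts, every class of $P_{t+1}$ is contained in some class of $P_0$, hence in some $V_j$. Because $q\le 1/2$ and increases by at least $\epsilon^5$ at each step, the process terminates after at most $s_0:=\lceil 1/(2\epsilon^{5})\rceil$ iterations, yielding a partition whose number of parts is bounded by a tower-type function of $\epsilon$ and $k$.

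The one nontrivial bookkeeping step is to convert the resulting (possibly unequal) partition into an equipartition while preserving both the $\epsilon$-regularity (up to a small slack absorbed into the final $\epsilon$) and the property that every part lies inside some $V_j$. This is done by chopping each part $W$ obtained above into blocks of a common size $m$ (chosen small relative to $\min_j|V_j|$ and the current part sizes), and throwing the leftover fractions into a new exceptional set $U_0$. Since each $W$ lies inside a single $V_j$, its blocks do too, so property (2) is preserved; the leftover is at most one block per part, so $|U_0|<k'$ provided $m$ is chosen appropriately; and $\epsilon$-regularity of a pair $(W_i,W_j)$ passes to any pair of sufficiently large sub-blocks (with a mild loss in the regularity parameter, compensated by running the iteration with a slightly smaller $\epsilon$ from the start).

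The main obstacle is this final equipartition step: we must simultaneously satisfy $|U_1|=\dots=|U_{k'}|$, $|U_0|<k'$, containment inside the $V_j$'s, and preservation of $\epsilon$-regularity, all with a uniform bound $K_{\ref{thm:Ref}}(\epsilon,k)$ independent of $|V|$. This is handled by running the iteration with parameter $\epsilon/2$ (say) and then choosing the block size $m$ as a function of the final number of parts and of $k$; the hypothesis $|V|>N_{\ref{thm:Ref}}(\epsilon,k)$ guarantees that $m$ can be taken large enough so that regularity is inherited by the sub-blocks and all counting constraints are met. The constants $N_{\ref{thm:Ref}}$ and $K_{\ref{thm:Ref}}$ produced this way depend only on $\epsilon$ and $k$, as required.
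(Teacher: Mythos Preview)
The paper does not prove this theorem; it is quoted as a known result from \cite{Sz} and \cite{FR} and used as a black box. Your sketch is the standard energy-increment proof of the refinement version of Szemer\'edi's lemma and is essentially what one finds in those references, so there is no discrepancy to discuss.
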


We will also need the following algorithmic version of the regularity lemma that uses a stronger notion of regularity.
For the following see \cite{ASSt} and \cite{RS}

\begin{definition}[$f$-Regular partition]
	For a function $f : \mathbb{N}\to(0, 1)$ and a graph $G$ we say that a partition $P = \{V_i\}_{i=0}^k$ of $V(G)$ such that $|V_0|<k$ and $|V_i|=|V_j|$ for every $i,j\geq 1$ is an $f$-regular partition if \textbf{all} pairs $(V_i, V_j ), 1 ≤\leq i < j \leq  k$, are $f(k)$-regular.
\end{definition}

\begin{theorem}[\cite{ASSt}] \label{thm:fRegPart}
	For every $l, \epsilon> 0$ and non-increasing function $f : \mathbb{N}\to(0, 1)$, there is
	an integer $K_{\ref{thm:fRegPart}}:= K_{\ref{thm:fRegPart}}(f, \epsilon, l)$ so that for  a given  graph $G$ on $n$ vertices, with $n>K_{\ref{thm:fRegPart}}$, one can add  or remove at most $\epsilon n^2$
	edges of $G$ and get a graph $G_0$
	that has an $f$-regular partition of order $k$, where
	$l \leq  k \leq K_{\ref{thm:fRegPart}}$. Furthermore, the needed changes and the partition can be found in polynomial time.
\end{theorem}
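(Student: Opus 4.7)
My plan is to derive Theorem \ref{thm:fRegPart} by iterating Szemer\'edi's classical regularity lemma (Theorem \ref{thm:Ref}) a bounded number of times and cleaning up the resulting partition by edge deletions. I would fix a decreasing sequence $\epsilon_0 > \epsilon_1 > \cdots$ with $\sum_i \epsilon_i \le \epsilon/2$, start from any equitable partition $P_0$ of order $k_0 \ge l$, and inductively refine: at step $i$, given a partition $P_i$ of order $k_i$, I would apply Theorem \ref{thm:Ref} to $P_i$ with parameter $\epsilon_i$ to obtain a refinement $P_{i+1}$ of order $k_{i+1} \le K_{\ref{thm:Ref}}(\epsilon_i, k_i)$ in which at most $\epsilon_i\binom{k_{i+1}}{2}$ pairs fail to be $\epsilon_i$-regular.

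The core of the argument is an energy-increment step. At each iteration I would check whether $P_{i+1}$ already has the strengthened property that \emph{every} pair is $f(k_{i+1})$-regular. If so, I stop and delete all edges inside the accumulated set of pairs that were not $f(k_j)$-regular at their step $j$; since each such pair contributes at most $(n/k_j)^2$ edges and there are at most $\epsilon_j\binom{k_j}{2}$ of them, the cumulative deletion is bounded by $\sum_j \epsilon_j n^2 \le \epsilon n^2$, and after the deletion the corresponding pairs are trivially $f(k)$-regular (density $0$). If $P_{i+1}$ fails the property, the standard sub-partitioning of the irregularity witnesses yields a further refinement that strictly increases the mean-square density $q(P) = \tfrac{1}{|P|^2}\sum_{a,b} d(V_a,V_b)^2$ by a quantity depending only on $f(k_{i+1})$. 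Since $0 \le q(P) \le 1$, the iteration must terminate after some bounded number of steps $I = I(\epsilon, f, l)$ independent of $n$, producing a partition of order $k$ in the range $[l, K_{\ref{thm:fRegPart}}]$.

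The algorithmic aspect amounts to using an algorithmic version of Szemer\'edi's lemma (as provided in \cite{ASSt} or \cite{RS}) in place of each application of Theorem \ref{thm:Ref}; locating the irregular pairs and their witnessing sub-rectangles is polynomial in $n$, and $I$ is a constant depending only on $\epsilon$, $f$, $l$, so the overall running time is polynomial in $n$. The main obstacle I anticipate is the interlocking choice of the sequence $(\epsilon_i)$ with the possible partition orders $k_{i+1}$: each $\epsilon_i$ must be small enough to control the defect against $f(k_{i+1})$ at the next level (a chicken-and-egg issue resolved by first using $K_{\ref{thm:Ref}}(\epsilon_i, k_i)$ as an a priori upper bound on $k_{i+1}$ and then selecting $\epsilon_i$ in terms of $f$ evaluated there), yet large enough that the cumulative deletion budget remains below $\epsilon n^2$ and the energy gain per iteration stays uniformly bounded below so that the bounded depth $I$ indeed suffices.
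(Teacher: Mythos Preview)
The paper does not prove Theorem \ref{thm:fRegPart}; it is quoted from \cite{ASSt} and used as a black box, so there is no ``paper's own proof'' to compare against. Your outline is in the right spirit, but the termination argument has a real gap. You assert that if $P_{i+1}$ fails to have \emph{every} pair $f(k_{i+1})$-regular, then refining via witness sets increases the energy $q$ by an amount depending only on $f(k_{i+1})$, and hence the process stops because $q\le 1$. In the standard energy proof the gain is of order $\epsilon^5$ only when an $\epsilon$-\emph{fraction} of pairs are irregular; when just a handful of pairs fail, the gain is of order $f(k_{i+1})^4/k_{i+1}^2$, which depends on $k_{i+1}$. Even granting a gain of $f(k_{i+1})^5$, the sequence $f(k_1)^5,f(k_2)^5,\ldots$ is decreasing (since $k_i\to\infty$ and $f$ is non-increasing) and can have partial sums bounded by $1$ forever, so no bound on the number of iterations follows. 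Your proposed fix for the chicken-and-egg issue, choosing $\epsilon_i\le f\bigl(K_{\ref{thm:Ref}}(\epsilon_i,k_i)\bigr)$, is a fixed-point condition that typically has no solution (take $f(m)=1/m$ and recall that $K_{\ref{thm:Ref}}$ is tower-type).

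The argument in \cite{ASSt} avoids this by using \emph{pigeonhole} on the energy rather than a uniform lower bound on the increment. One iterates, taking $P_{j+1}$ to be an $f(|P_j|)$-regular refinement of $P_j$; among the first $O(1/\epsilon^2)$ steps there must be some $j$ with $q(P_{j+1})-q(P_j)<\epsilon^2$. One then outputs the \emph{coarse} partition $P_j$ (so $k=|P_j|$, and the target regularity $f(k)$ is already the parameter used to build $P_{j+1}$, dissolving the chicken-and-egg) and \emph{modifies} the graph so that inside every $P_j$-pair $(V_a,V_b)$ each $P_{j+1}$-sub-pair is reset to density $d(V_a,V_b)$. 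Cauchy--Schwarz together with the small energy gap bounds the total edit by $O(\epsilon n^2)$, and after the edit each $(V_a,V_b)$ is assembled from equal-density $f(k)$-regular pieces, hence is itself $f(k)$-regular. Note that this genuinely requires adding as well as deleting edges; your deletion-only cleanup would leave the coarse pairs irregular whenever their sub-pair densities differ.
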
 

Note that in \cite{ASSt} the definition of $f$-regular partition is slightly different. There is no set $V_0$ and the sizes of the sets are such that for every $i,j$, $\big||V_i|-|V_j|\big|\leq 1$. 
It is easy to check that the two versions are equivalent.
For simplicity we will use the version which has a set $V_0$ and $k$ sets of equal size throughout the paper.  

\subsection{Definitions of Weighted Graphs and Partition Graphs}

We use the following definitions for weighted graphs.
\begin{definition}
$\mbox{ }$
\begin{enumerate}
	\item A weighted graph $W$ is a graph on $n$ vertices with a weight function $w:E(K_n)\to [0,1]$, where we identify between $w(e)=0$ and $e\notin E(W)$. 
	
	\item For a fixed graph $T$ let $\mathcal{T}$ be the the set of copies of $T$ in $W$ and let
	$$\N(W,T)=\sum_{T\in \mathcal{T}\mbox{     }} \prod_{(v_i,v_j)\in E(T)}w(v_i,v_j).$$ Call $W$ a $T$-free graph if $\N(W,T)=0$.
	
	\item Let $W'$ be a weighted graph on $V(W)$ with a weight function $w':E(W')\to [0,1]$. We say that $W'$  is a conventional subgraph of $W$ if $\forall e\in K_n$ either $w'(e)= w(e)$ or $w'(e)=0$. 

\end{enumerate}
\end{definition}

Given a graph $G$ and an $\epsilon$-regular partition of it, say $P=\{V_i\}$, we would like to associate a weighted graph to $G$ and $P$ and to relate subgraphs of the weighted graph to subgraphs of $G$. To do this we use the following definitions:
\begin{definition}
	$\mbox{ }$
\begin{enumerate}
	\item Given a graph $G$ and an $\epsilon$-regular partition $P=\{V_i\}_{i=0}^k$ of its vertices define the $(\epsilon,d)$-partition graph $W$ to be a weighted graph on $k$ vertices $\{v_1,...,v_k\}$. The weight function is $w (v_i,v_j)=d(V_i,V_j)$ if $(V_i,V_j)$ is an $\epsilon$-regular pair with density at least $d$ and  $w(v_i,v_j)=0$ otherwise.
	
	\item For a conventional subgraph of $W$, say $W'$, let $G_{W'}$ be the following subgraph of $G$ on the same set of vertices $V(G)=V(G_{W'})$. An edge $e=\{u,u'\}\in E(G)$ is also an edge of $G_{W'}$ if and only if $u\in V_i, u'\in V_j$ and $w'(v_i,v_j)>0$. The vertices of $V_0$ form an independent set.
\end{enumerate}
\end{definition}
%

\subsection{Embedding and Counting Copies of Fixed Graphs}

For a graph $R$ and an integer $h$ let $R(h)$ be the $h$-blowup of $R$, that is the graph obtained by replacing each vertex with an independent set of size $h$, and each edge with a complete bipartite graph between the corresponding independent sets.


\begin{theorem}[Embedding Lemma, see, e.g., \cite{KSSS}]\label{lem:embedding}
Given $d>\epsilon> 0$, a graph $R$, and a
positive integer $m_{\ref{lem:embedding}}$, construct a graph $G$ by replacing every vertex of $R$ by
$m_{\ref{lem:embedding}}$ vertices, and by replacing each edges of $R$ by an $\epsilon$-regular pairs of density at least
$d$. Let $F$ be a subgraph of $R(h)$ and maximum degree $\Delta > 0$, 
let $\delta=d-\epsilon$ and $\epsilon_0=\delta^{\Delta}/(2+\Delta)$. If $\epsilon\leq \epsilon_0$ and $h-1\leq \epsilon_0 m_{\ref{lem:embedding}}$ then $F\subset G$.
\end{theorem}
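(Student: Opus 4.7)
The plan is to embed the vertices of $F$ one at a time in a greedy fashion. Fix an arbitrary ordering $f_1, f_2, \ldots, f_s$ of $V(F)$, and for each $f_j$ let $X_{f_j} \subseteq V(G)$ be the $m$-vertex cluster corresponding to the blob of $R$ containing $f_j$. Maintain a candidate set $C_{f_j} \subseteq X_{f_j}$ for each as-yet-unembedded vertex, initialized to $C_{f_j} = X_{f_j}$. When $f_i$ is embedded as some $v_i \in C_{f_i}$, for every yet unembedded neighbor $f_j$ of $f_i$ in $F$ we update $C_{f_j} \leftarrow C_{f_j} \cap N_G(v_i)$; this guarantees that all future images remain adjacent to $v_i$, so provided the chosen $v_i$'s are pairwise distinct, the result is an embedded copy of $F$ in $G$. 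Note that distinct adjacent vertices of $F$ lie in distinct blobs, so the update uses a genuinely cross-cluster regular pair.

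The core of the argument is to show that at each step a suitable $v_i$ can be found. Each $C_f$ shrinks at most once per neighbor of $f$ embedded before $f$, hence at most $\Delta$ times in total; each such shrinkage will be by a factor of at least $\delta = d - \epsilon$ (enforced by the very condition we are about to impose on $v_i$), so $|C_f| \geq \delta^{\Delta} m$ throughout. By the choice of $\epsilon_0$ this is at least $(2+\Delta)\epsilon m \geq \epsilon m$, so the $\epsilon$-regularity of the pair $(X_{f_i}, X_{f_j})$ applies to the subpair $(C_{f_i}, C_{f_j})$: if $B \subseteq C_{f_i}$ has size at least $\epsilon m$, then $d(B, C_{f_j}) \geq d(X_{f_i}, X_{f_j}) - \epsilon \geq \delta$. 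Taking $B$ to be the \emph{bad} set $\{v \in C_{f_i} : |N_G(v) \cap C_{f_j}| < \delta |C_{f_j}|\}$ would contradict this density bound (the edge count from $B$ to $C_{f_j}$ would be less than $\delta |B||C_{f_j}|$), so the bad set has size strictly less than $\epsilon m$.

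Summing over the at most $\Delta$ yet unembedded neighbors of $f_i$, at most $\Delta \epsilon m$ choices of $v_i$ are ruled out by the regularity condition; in addition, at most $h - 1$ previously embedded images lie inside $X_{f_i}$ and must be avoided. The number of surviving candidates is therefore at least
\[
|C_{f_i}| - \Delta \epsilon m - (h-1) \;\geq\; \delta^{\Delta} m \;-\; \Delta \epsilon_0 m \;-\; \epsilon_0 m \;=\; \delta^{\Delta} m - (\Delta+1)\epsilon_0 m \;=\; \frac{\delta^{\Delta}}{\Delta+2}\, m,
\]
which is strictly positive. A valid $v_i$ therefore exists, the shrinkage factor $\delta$ is realized, and the greedy embedding terminates successfully.

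The only delicate point is the triple balancing encoded in the formula $\epsilon_0 = \delta^{\Delta}/(2+\Delta)$: one share of $\epsilon_0$ must pay for the $\Delta$ regularity-based bad sets of size at most $\epsilon m$ each, a second share must absorb the $h-1 \leq \epsilon_0 m$ previously used vertices in the same blob, and we must simultaneously ensure that $|C_{f_i}|$ itself, controlled from below by the $\Delta$ successive factor-$\delta$ shrinkages, stays well above the regularity threshold $\epsilon m$. The denominator $2+\Delta$ is tuned so that these three budgets together do not exhaust $\delta^{\Delta} m$; any significantly weaker choice of $\epsilon_0$ would break the final inequality.
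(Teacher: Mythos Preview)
The paper does not prove this statement; it is quoted as a known result from \cite{KSSS} and used as a black box, so there is no ``paper's own proof'' to compare against. Your argument is the standard greedy embedding proof from the literature (essentially the one in \cite{KSSS}): maintain candidate sets, show they never drop below $\delta^{\Delta} m \geq \epsilon m$ so regularity continues to apply, bound the bad set for each yet-unembedded neighbor by $\epsilon m$, and subtract the at most $h-1$ previously used vertices in the same cluster. The arithmetic in your final displayed inequality is correct and matches exactly the constant $\epsilon_0 = \delta^{\Delta}/(2+\Delta)$ in the statement, so the proof is complete.
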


As we aim to use the embedding lemma with an $(\epsilon,d)$ partition graph, we need to choose $d$ to suite a family of graphs $\FF$. Let $\Delta(\FF)=\max_{F\in \FF}\{\Delta(F)\}$, and let $v(\FF)=\max_{F\in \FF}\{v(F)\}$. For a fixed $\epsilon$ and family $\FF$ let $\epsilon_0=\epsilon$ and let $d_{\ref{lem:embedding}}:=d(\epsilon,\Delta(\FF))=\delta+\epsilon$ as they appear in Theorem \ref{lem:embedding}. Note that $d_{\ref{lem:embedding}}$ tends to zero as $\epsilon$ tends to zero. Let $m_{\ref{lem:embedding}}:=m_{\ref{lem:embedding}}(\epsilon,\FF)=v(\FF)\frac{1}{\epsilon}$. 

\begin{lemma}\label{lem:TCopiesUsingSamePartTwice}
	Let $W$ be a weighted graph on $n$ vertices, $\epsilon>0$ be a constant and $P=\cup_{i=1}^k V_i$ a partition of $V(W)$ into $k\geq \frac{1}{\epsilon}$ parts of equal size. Let $\T'$ be the set of all copies of $T$ using at least two vertices from the same $V_i$. Then $$\sum_{T\in \T'} \prod_{(v_i,v_j)\in E(T)}w(v_i,v_j)<\epsilon n^t.$$
\end{lemma}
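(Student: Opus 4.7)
The plan is to exploit the fact that all weights lie in $[0,1]$, so for every copy $T' \in \T'$ the product $\prod_{(v_i,v_j)\in E(T)} w(v_i,v_j)$ is at most $1$. Therefore it suffices to upper bound $|\T'|$, the number of copies of $T$ in the complete graph on $V(W)$ that place at least two of their $t$ vertices in the same partition class $V_i$.

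To count $|\T'|$, I would pass to ordered injective maps $\phi\colon V(T) \to V(W)$, noting that each unlabeled copy contributes at most $|\mathrm{Aut}(T)| \le t!$ such maps, so bounding ordered maps gives a (slightly loose) upper bound on $|\T'|$. The event that a copy uses two vertices from a common class corresponds to the event that $\phi(a)$ and $\phi(b)$ lie in the same $V_i$ for some pair of distinct $a,b \in V(T)$. A union bound over the at most $\binom{t}{2}$ such pairs reduces the problem to bounding, for a fixed pair $(a,b)$, the number of injective maps $\phi$ with $\phi(a),\phi(b)$ in a common class.

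For that latter count, I would choose the common class $V_i$ in $k$ ways, pick the images of $a$ and $b$ inside $V_i$ in at most $(n/k)^2$ ways, and pick the images of the remaining $t-2$ vertices of $T$ arbitrarily in at most $n^{t-2}$ ways each, giving the bound
\[
k \cdot (n/k)^2 \cdot n^{t-2} \;=\; \frac{n^t}{k}.
\]
Multiplying by $\binom{t}{2}$ and using the hypothesis $k \ge 1/\epsilon$ yields
\[
\sum_{T \in \T'} \prod_{(v_i,v_j)\in E(T)} w(v_i,v_j) \;\le\; |\T'| \;\le\; \binom{t}{2}\frac{n^t}{k} \;\le\; \binom{t}{2}\,\epsilon\, n^t,
\]
which is of the form $\epsilon' n^t$ for a constant $\epsilon'$ depending only on $\epsilon$ and $T$ and tending to $0$ with $\epsilon$; this fits the convention in Section~\ref{sec:Regularity} that the various $\epsilon_i$ are such constants, so the stated bound (up to relabeling $\epsilon$) follows.

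There is no real obstacle here: the only mild subtlety is keeping track of labeled versus unlabeled copies and absorbing the combinatorial factor $\binom{t}{2}$ into the constant, both of which are harmless since we only seek an upper bound of the form $\epsilon n^t$.
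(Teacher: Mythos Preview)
Your proposal is correct and follows essentially the same approach as the paper: bound each weighted product by $1$ to reduce to estimating $|\T'|$, then count crudely to get $|\T'| = O_t(n^t/k) \le O_t(\epsilon n^t)$ using $k\ge 1/\epsilon$. The paper phrases the count as ``a copy in $\T'$ uses vertices from at most $t-1$ classes, so $|\T'| < k^{t-1}(n/k)^t = n^t/k$'' rather than via your union bound over colliding pairs, but the two counting arguments are interchangeable and both absorb the $t$-dependent constant into the $\epsilon$ via the convention stated at the start of Section~\ref{sec:Regularity}.
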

\begin{proof}
	As $w(v_i,v_j)\leq 1$ note that $\sum_{T\in \T'} \prod_{(v_i,v_j)\in E(T)}w(v_i,v_j)\leq|\T'|$, and so it is enough to bound the number of copies of $T$ in $\T'$. A copy from $\T'$ uses vertices from at most $t-1$ sets of the partition, and in each such set there are $\frac{n}{k}$ vertices. Thus
	\[
	|\T'|<k^{t-1}(\frac{n}{k})^{t}\leq n^t \frac{1}{k}
	\]
	as $\epsilon\geq \frac{1}{k}$ the required result follows.
\end{proof}

Note that the above lemma can be used for unweighted graphs by choosing  $w(v_i,v_j)\in \{0,1\}$ appropriately.


\begin{lemma}\label{lem:countingLemma}
	Let $T$ be a graph on $t$ vertices, let $G$ be a graph on $n$ vertices, $P=V_0\cup\{V_i\}_{i=1}^k$ an $\epsilon$-regular partition of its vertices and let $W$ be its $(\epsilon,d_{\ref{lem:embedding}})$ partition graph. Then there exists $\delta_{\ref{lem:countingLemma}}:=\delta_{\ref{lem:countingLemma}}(\epsilon,t)$, that tends to zero when $\epsilon$ tends to zero, such that
	\begin{equation*}
	|\N(G,T)-|V_i|^{t}\N(W,T)|<\delta_{\ref{lem:countingLemma}} n^{t}
	\end{equation*}

\end{lemma}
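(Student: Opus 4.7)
The plan is to bookkeep how copies of $T$ in $G$ correspond to copies of $T$ in the partition graph $W$, using the standard counting lemma for $\epsilon$-regular pairs as the technical engine. Let $m := |V_i|$ for $i \geq 1$; it is cleaner to work with injective homomorphisms, setting $\mathrm{hom}(T, H) = |\mathrm{Aut}(T)| \cdot \N(H, T)$ for $H = G$ and (with the natural weighted interpretation) $H = W$, so that any bound of the form $\delta n^t$ on the labeled difference yields the same bound on the unlabeled one up to the fixed factor $|\mathrm{Aut}(T)|$.

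To each injective homomorphism $f \colon V(T) \to V(G)$ associate its part-type $\psi_f \colon V(T) \to \{0, 1, \ldots, k\}$, where $\psi_f(a)$ denotes the index of the part containing $f(a)$, and partition the $\psi$'s into three cases: (I) $\psi$ hits the exceptional part $0$; (II) $\psi$ avoids $0$ but is not injective; (III) $\psi$ is an injection into $\{1, \ldots, k\}$. Cases (I) and (II) contribute nothing to $\mathrm{hom}(T, W)$, so their entire count is error. Case (I) is bounded directly by $t \cdot |V_0| \cdot n^{t-1} \leq t k n^{t-1}$ (choose one vertex of $T$ to land in $V_0$, then use $|V_0| < k$ and trivial bounds elsewhere). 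For case (II) I would apply Lemma \ref{lem:TCopiesUsingSamePartTwice} to the unweighted graph $G[V \setminus V_0]$ with the $k$ equal parts $V_1, \ldots, V_k$, obtaining at most $|\mathrm{Aut}(T)| \cdot \epsilon n^t$ labeled contributions, under the mild assumption $k \geq 1/\epsilon$ (to be arranged when fixing constants in the larger application).

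The main work is case (III). For injective $\psi$ I would invoke the standard counting lemma for regular pairs, a routine companion to the embedding lemma already cited as Theorem \ref{lem:embedding} (see e.g.\ \cite{KSSS}): if every pair $(V_{\psi(a)}, V_{\psi(b)})$ with $(a,b) \in E(T)$ is $\epsilon$-regular with density $d_{ab}$, then
\begin{equation*}
N(\psi) \;=\; m^t \prod_{(a,b) \in E(T)} d_{ab} \;\pm\; \gamma(\epsilon, T)\, m^t,
\end{equation*}
where $N(\psi)$ counts injective homomorphisms with part-type $\psi$ and $\gamma(\epsilon, T) \to 0$ as $\epsilon \to 0$. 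Case (III) then splits further by the behavior of the pairs used: (III.A) all regular and of density $\geq d_{\ref{lem:embedding}}$, so $\prod w = \prod d_{ab}$ and the error per $\psi$ is $\gamma m^t$, summing over at most $k^t$ choices to $\gamma n^t$; (III.B) all regular but some density $< d_{\ref{lem:embedding}}$, where $\prod w = 0$, while $\prod d_{ab} < d_{\ref{lem:embedding}}$ forces $N(\psi) \leq (d_{\ref{lem:embedding}} + \gamma) m^t$, summing to at most $(d_{\ref{lem:embedding}} + \gamma) n^t$; (III.C) some relevant pair irregular, where $\prod w = 0$ and $N(\psi) \leq m^t$, and the count of such $\psi$ is at most $|E(T)| \cdot \epsilon \binom{k}{2} \cdot k^{t-2}$ by the definition of an $\epsilon$-regular partition, contributing at most $\binom{t}{2} \epsilon n^t$.

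Summing the four error terms and dividing by $|\mathrm{Aut}(T)|$ yields $|\N(G, T) - m^t \N(W, T)| \leq \delta_{\ref{lem:countingLemma}} n^t$ with $\delta_{\ref{lem:countingLemma}}$ a linear combination of $\gamma(\epsilon, T)$, $\epsilon$, $d_{\ref{lem:embedding}}$ and $tk/n$, each tending to zero as $\epsilon \to 0$ (with $n$ taken large enough for the last term to vanish). The only non-routine ingredient is the counting lemma for regular pairs; everything else is careful accounting of the three sources of error---vertices falling in $V_0$, two vertices falling in the same non-exceptional part, and edge-pairs that are either irregular or too sparse.
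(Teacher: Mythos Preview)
Your argument is correct and follows essentially the same route as the paper: separate off copies touching $V_0$, copies repeating a part (via Lemma~\ref{lem:TCopiesUsingSamePartTwice}), copies using an irregular pair, and copies using a sparse pair, then apply the standard counting lemma (the paper cites it from \cite{DLR} as Lemma~\ref{lem:countingTs}) to the remaining ``conventional'' placements. The only cosmetic differences are that you work with labeled injective homomorphisms and organize the case split by part-type $\psi$, whereas the paper stays with unlabeled copies and first passes to $\N^P(G,T)$ before summing the counting lemma over copies in $W$; both incur the same four error terms and the same residual $k/n$ dependence handled by taking $n$ large.
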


\begin{proof}

Let $\N^P(G,T)$ be the number of copies of $T$ in $G$ using at most one vertex from each set $\{V_i\}_{i=1}^k$ and not using vertices from $V_0$, edges between sets $(V_i,V_j)$  which are not $\epsilon$-regular or with density smaller than $d_{\ref{lem:embedding}}$. Call these copies \textit{conventional}. 
 
There are at most $\epsilon\binom{ k}{2} (\frac{n}{k})^2 n^{t-2} \leq \epsilon n^{t}$ copies of $T$ using an edge between irregular pair, there are at most $\binom{k}{2}d_{\ref{lem:embedding}}(\frac{n}{k})^2n^{t-2}\leq d_{\ref{lem:embedding}} n^t$ copies using edges between sets of small density and there are at most $kn^{t-1}$ copies using a vertex from $V_0$. Together with Lemma \ref{lem:TCopiesUsingSamePartTwice} we get that for $\delta_0>0$ that tends to zero when $\epsilon$ tends to zero 
\begin{equation}\label{eq:usingSamePartitionDoesntMatter}
|\N(G,T)-\N^P(G,T)|<\delta_0 n^t.
\end{equation}

Let $\T=\{T_i\}$ be the set of all copies of $T$ in $W$. For a given copy $T_i$ let $V(T_i)=\{v_{i_1},..,v_{i_t}\}$ be the sets of vertices in $W$ that $T_i$ uses and let $V_{i_1},..,V_{i_t}$ be the corresponding sets in $P$ and let $\N(G,T_i)$ be the number of copies of $T$ in $G$ using a vertex from $V_{i_j}$ for the role of $v_{i_j}$.

A simplified version of Lemma 1.6 from \cite{DLR} is the following
\begin{lemma}\label{lem:countingTs}
	Let $T$ be a fixed graph on $t$ vertices, and let $G$ be a graph with an $\epsilon$-regular partition into $t$ parts $P=V_1,...,V_t$. Let $d_{i,j}$ be the density between the sets $V_i$ and $V_j$, and assume that $d_{i,j}=0$ for every $\{i,j\}\not \in E(T)$. Then for $\delta_{\ref{lem:countingTs}}:=\delta_{\ref{lem:countingTs}}(\epsilon,t)$ that tends to zero as $\epsilon$ rends to zero,
	\begin{equation*}
	|\N(G,T)-|V_i|^t\prod_{(i,j)\in E(T)}d_{i,j}|\leq \delta_{\ref{lem:countingTs}} |V_i|^t
	\end{equation*}
\end{lemma}

In our case, when counting the number of copies of $T_i$ we focus only on the sets $V_{i_1},..,V_{i_t}$, and assume that the density between two sets that do not correspond to an edge of $T$ is zero. Thus, 
\[
|\N(G,T_i)-|V_i|^t\prod_{(v_{i_k},v_{i_j})\in E(T_i)}w(v_{i_k},v_{i_j})|\leq \delta_{\ref{lem:countingTs}} |V_i|^t
\]

%

Each conventional copy of $T$ in $G$ can be mapped to a copy $T_i$ in $W$ by mapping each vertex $v$ of $T$ to the set $V_i$ it is contained in. Thus $\N^P(G,T)=\sum_{T_i\in \T}\N(G,T_i)$. 
%
%

Finally, note that $|V_i|\leq \frac{n}{k}$, and together with the above we get
\begin{align*}
|\N^P(G,T)-|V_i|^t\N(W,T)|&=|\sum_{T_i\in \T}\big( \N(G,T_i)-|V_i|^t\prod_{(v_i,v_j)\in E(T_i)}w(v_i,v_j)\big)| \\
&\leq \sum_{T_i\in \T} |\N(G,T_i)-|V_i|^t\prod_{(v_i,v_j)\in E(T_i)}w(v_i,v_j)|\\
&\leq |\T|\delta_{\ref{lem:countingTs}} |V_i|^t\leq k^t\delta_{\ref{lem:countingTs}} (\frac{n}{k})^t=\delta_{\ref{lem:countingTs}}n^t
\end{align*}

This together with (\ref{eq:usingSamePartitionDoesntMatter}) gives the required result. 

\end{proof}

\subsection{Homomorphisms of Graphs and Homomorphism Freeness}

\begin{definition}
	Given graphs $G$ and $F$ we say that the function $\varphi:V(F)\to V(G)$ is a homomorphism from $F$ to $G$ if it maps edges to edges.
\end{definition}

\begin{definition}
		Given a graph $G$ and a family of graphs $\FF$, we say that $G$ is $\FF$-homomorphism-free  ($\Hhom$-free, for short), if there is no homomorphism from $F$ to $G$ for any $F\in \FF$.
\end{definition}

\begin{definition}
For a weighted graph $W$, a graph $T$ and a family of graphs $\FF$ define the homomorphism extremal number to be $$\ex_{hom}(W,T,\FF)=\max \{\N(W_0,T):\mbox{ }W_0\mbox{ is  a conventional subgraph of }W\mbox{ and is }\Hhom\mbox{-free}  \}$$
\end{definition}

%
%


\begin{lemma}\label{lem:F-Hom-free}
	Let $G$ be a graph, $\FF$ a finite family of graphs, $\epsilon>0$,  $P=\cup_{i=0}^k V_i$ an $\epsilon$-regular partition of $G$ and $W$ its $(\epsilon,d_{\ref{lem:embedding}})$-partition graph. If $v(G)\geq m_{\ref{lem:embedding}}k$, then the following holds:
	\begin{enumerate}
		\item If $G$ is $\FF$-free then $W$ is $\Hhom$-free.
		\item If $W$ is $\Hhom$-free then $G_W$ is $\FF$-free.
	\end{enumerate}
	
\end{lemma}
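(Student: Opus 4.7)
The plan is to prove each implication by contraposition, using the homomorphic image of an $F \in \FF$ as a bridge between $G$ and $W$. The definitions of the $(\epsilon, d)$-partition graph $W$ and of the associated subgraph $G_{W'}$ were set up precisely so that this bridge works in both directions.

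For (1), I would argue as follows. Assume $W$ is not $\Hhom$-free, so some $F \in \FF$ admits a homomorphism $\varphi : V(F) \to V(W)$. Let $R$ be the subgraph of $W$ with vertex set $\varphi(V(F))$ and edge set $\{\varphi(u)\varphi(v) : uv \in E(F)\}$. Because $W$ is loopless, $\varphi(u) \neq \varphi(v)$ whenever $uv \in E(F)$, so each fibre $\varphi^{-1}(r)$ is independent in $F$ of size at most $v(F) \leq v(\FF)$; this realises $F$ as a subgraph of the blow-up $R(v(\FF))$. Every edge of $R$ is, by the definition of the $(\epsilon, d_{\ref{lem:embedding}})$-partition graph, an $\epsilon$-regular pair $(V_i,V_j)$ of density at least $d_{\ref{lem:embedding}}$ in $G$, and each such part has at least $m_{\ref{lem:embedding}}$ vertices since $v(G)\geq m_{\ref{lem:embedding}}k$. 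With the choices of $\epsilon$, $d_{\ref{lem:embedding}}$ and $m_{\ref{lem:embedding}}$ fixed just before this lemma, the hypotheses $\epsilon \leq \epsilon_0$ and $v(F)-1 \leq \epsilon_0\, m_{\ref{lem:embedding}}$ of Theorem \ref{lem:embedding} are met, and applying the Embedding Lemma to $R$ and $F$ yields a copy of $F$ inside $G$, contradicting $\FF$-freeness of $G$.

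For (2), assume $G_W$ contains some $F \in \FF$ as a subgraph via an embedding $\psi : V(F) \to V(G)$. Define $\varphi : V(F) \to V(W)$ by $\varphi(u) = v_i$ whenever $\psi(u) \in V_i$ with $i \geq 1$, and $\varphi(u) = v_1$ (arbitrarily) if $\psi(u) \in V_0$; I claim this is a homomorphism. Fix $uv \in E(F) \subseteq E(G_W)$. By the definition of $G_{W'}$, the vertices of $V_0$ are isolated in $G_W$, so $\psi(u),\psi(v) \in V_i \cup V_j$ with $i,j \geq 1$ and $w(v_i,v_j) > 0$; since the weight is undefined (and hence zero) on pairs $(v_i,v_i)$, we must have $i \neq j$, and then $v_iv_j \in E(W)$. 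Thus $\varphi$ sends $uv$ to an edge, so $\varphi$ is a genuine homomorphism from $F$ to $W$, contradicting $\Hhom$-freeness of $W$.

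The essentially only nontrivial step is the numerical check in part (1): one must verify that the parameter choices made just before this lemma feed correctly into the Embedding Lemma. With $\epsilon_0 = \epsilon$, $\delta$ determined by $\epsilon_0 = \delta^{\Delta(\FF)}/(2 + \Delta(\FF))$, $d_{\ref{lem:embedding}} = \delta + \epsilon$, and $m_{\ref{lem:embedding}} = v(\FF)/\epsilon$, the condition $v(F)-1 \leq \epsilon_0\, m_{\ref{lem:embedding}}$ reduces to $v(F) \leq 1 + v(\FF)$, which holds for every $F \in \FF$; the other hypotheses of Theorem \ref{lem:embedding} are immediate from the construction. Everything else in both parts is a direct unpacking of the definitions of homomorphism, $(\epsilon,d_{\ref{lem:embedding}})$-partition graph and $G_{W'}$.
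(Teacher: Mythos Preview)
Your proof is correct and follows essentially the same approach as the paper: part (1) is proved by contraposition via the Embedding Lemma (observing that a homomorphism $F\to W$ exhibits $F$ as a subgraph of a blow-up of $W$), and part (2) by sending each vertex of a copy of $F$ in $G_W$ to the partition class containing it. Your write-up is in fact more careful than the paper's, in that you explicitly verify the numerical hypotheses of Theorem \ref{lem:embedding} and handle the (harmless) $V_0$ case in part (2).
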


\begin{proof}
	For the first part, assume towards contradiction that there is a graph $F\in \FF$ such that there is a homomorphism of it into $W$. To apply Lemma \ref{lem:embedding} first note that this means that $F$ is a subgraph of $W(v(\FF))$, which is the graph obtained by replacing every vertex of $W$ by an independent set of size $v(\FF)$ and every edge with positive weight by a complete bipartite graph between the corresponding independent sets. Furthermore, as $n\geq m_{\ref{lem:embedding}}k$ and the edges correspond to $\epsilon$-regular pairs in the partition with density at least $d_{\ref{lem:embedding}}$ we get that indeed $G$ must contain a copy of $F$. This is a contradiction as we assumed that $G$ is $\FF$-free.
	
	As for the second part, assume that there is a copy of some $F\in \FF$ in $G_W$. Let $V(F)=\{v_1,...,v_f\}$ be the vertices of this copy and assume that in the copy in $G_W$ vertex $v_j$ comes from the set $V_{k_j}$. If $(v_i,v_j)$ is an edge in the copy $F$, this means that there is an edge between the vertices $v_{k_i},v_{k_j}$ in $W$. Thus the mapping $v_i\to v_{k_i}$ is a homomorphism. As we assumed that $W$ is $\Hhom$-free this is a contradiction.
\end{proof}

\subsection{Auxiliary Lemmas}

Here we prove simple lemmas to be used in the proof of Theorem \ref{thm:algEx}.

\begin{lemma}\label{lem:removingSmallNumOfCopiesOfT}
	Let $G$ and $G'$ be graphs on $n$ vertices, such that one can edit (i.e. add or remove) $\delta n^2$ edges of $G$ and get $G'$. Then for every fixed graph $T$ and family of graphs $\FF$,
	$$|ex(G,T,\FF)-ex(G',T,\FF)|\leq \delta n^{t}.$$
\end{lemma}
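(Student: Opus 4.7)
The plan is a direct coupling argument: take an optimal $\FF$-free witness inside one of the two graphs, edit it minimally so that it lives inside the other, and bound how many copies of $T$ get destroyed in the process. I will first prove the one-sided inequality $\ex(G',T,\FF)\ge \ex(G,T,\FF)-c_T\delta n^{t}$ for a constant $c_T$ depending only on $T$, and then apply the identical argument with the roles of $G$ and $G'$ reversed (now undoing additions instead of deletions) to obtain both sides of the absolute value.

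Concretely, I would pick an $\FF$-free subgraph $H\subseteq G$ with $\N(H,T)=\ex(G,T,\FF)$, let $E^-=E(G)\setminus E(G')$ (which has $|E^-|\le \delta n^2$), and set $H':=H\setminus E^-$. Then $H'$ is a subgraph of $G\setminus E^-\subseteq G'$ and is $\FF$-free, being a subgraph of the $\FF$-free graph $H$. In particular, $\N(H',T)\le \ex(G',T,\FF)$, so it remains to lower bound $\N(H',T)$ in terms of $\N(H,T)$.

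Next I would count the copies of $T$ lost when passing from $H$ to $H'$: any such copy must use at least one edge of $E^-$. A standard edge-counting step shows that any single edge lies in at most $2e(T)\,n^{t-2}$ labeled copies of $T$, since one first picks which of the $2e(T)$ oriented edges of $T$ maps onto it and then embeds the remaining $t-2$ vertices of $T$ in at most $n^{t-2}$ ways. Summing over $E^-$ yields $\N(H,T)-\N(H',T)\le 2e(T)\,\delta\, n^{t}$, which produces the desired one-sided bound.

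There really is no substantive obstacle; the only cosmetic point is that the natural argument carries a factor $c_T=2e(T)$ in front of $\delta n^{t}$, whereas the statement writes just $\delta n^{t}$. This is consistent with the convention announced at the start of Section \ref{sec:Regularity} that constants depending only on $T$ and $\FF$ are absorbed into the $\delta$-symbols, so the lemma as stated follows after such absorption.
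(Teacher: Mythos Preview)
Your proposal is correct and follows essentially the same route as the paper: pick an optimal $\FF$-free subgraph of $G$, delete the edges of $E(G)\setminus E(G')$ to obtain an $\FF$-free subgraph of $G'$, bound the number of $T$-copies destroyed by $|E^-|\cdot O_T(n^{t-2})$, and conclude by symmetry. Your treatment is in fact slightly more careful than the paper's, which writes the per-edge loss simply as $n^{t-2}$ without the $2e(T)$ factor; as you observe, this is harmless under the stated convention that constants depending only on $T$ and $\FF$ are absorbed.
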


\begin{proof}
	Let $E$ be the set of edges we need to delete from $G$ to make it a subgraph of $G'$, and let $G_0$ be the subgraph of $G$ which is $\FF$-free and has the maximum possible number of copies of $T$. If $G'_0$ is the subgraph of $G_0$ obtained by deleting from it any edges from $E$ then $G'_0$ is an $\FF$-free subgraph of $G'$ and the following holds 
	\[\ex(G',T,\FF)\geq \N(G'_0,T)\geq\N(G_0,T)-\delta n^2n^{t-2}= \ex(G,T,\FF)-\delta n^{t}.\] As this is symmetric for $G$ and $G'$ we get the needed result.
\end{proof}

\begin{lemma}\label{lem:minusEpsilonAndMult}
	For every $\epsilon>0$ and $r\in \mathbb{N}$ there is $\delta_{\ref{lem:minusEpsilonAndMult}}:=\delta_{\ref{lem:minusEpsilonAndMult}}(\epsilon,r)$ that tends to zero as $\epsilon$ tends to zero such that for every set of constants $0<\alpha_1,...,\alpha_r<1$
	\[
	\prod_{i=1}^r(\alpha_i-\epsilon)>\prod_{i=1}^r\alpha_i-\delta_{\ref{lem:minusEpsilonAndMult}}
	\]
\end{lemma}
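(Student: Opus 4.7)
The plan is to expand $\prod_{i=1}^r(\alpha_i-\epsilon)$ as a sum indexed by subsets of $[r]$, isolate the main term $\prod_{i=1}^r\alpha_i$, and bound the remaining contribution directly. Writing
\[
\prod_{i=1}^r(\alpha_i-\epsilon)=\sum_{S\subseteq[r]}(-\epsilon)^{\,r-|S|}\prod_{i\in S}\alpha_i,
\]
the term corresponding to $S=[r]$ is exactly $\prod_{i=1}^r\alpha_i$, so the difference $\prod(\alpha_i-\epsilon)-\prod\alpha_i$ is a sum over the $2^r-1$ proper subsets of $[r]$.

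Next I would bound each remaining summand by using the two hypotheses $0<\alpha_i<1$ (so $\prod_{i\in S}\alpha_i\le 1$) and $\epsilon\le 1$ (which we may assume since we care only about the regime $\epsilon\to 0$). For any $S\subsetneq[r]$ we have $r-|S|\ge 1$, hence $|(-\epsilon)^{r-|S|}|\le \epsilon$, and the absolute value of each summand is at most $\epsilon$. Applying the triangle inequality over all $2^r-1$ proper subsets yields
\[
\left|\prod_{i=1}^r(\alpha_i-\epsilon)-\prod_{i=1}^r\alpha_i\right|\le (2^r-1)\epsilon,
\]
which immediately implies $\prod_{i=1}^r(\alpha_i-\epsilon)>\prod_{i=1}^r\alpha_i-2^r\epsilon$. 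Taking $\delta_{\ref{lem:minusEpsilonAndMult}}(\epsilon,r):=2^r\epsilon$ gives a constant depending only on $\epsilon$ and $r$ that tends to zero as $\epsilon\to 0$ with $r$ fixed.

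There is essentially no real obstacle: the argument is a one-line expansion followed by a crude triangle inequality. The only mild subtlety is that some $\alpha_i$ may be smaller than $\epsilon$, so that individual factors $(\alpha_i-\epsilon)$ are negative and the product on the left can have arbitrary sign; but this does not affect the argument, since the expansion is a polynomial identity valid regardless of signs and the per-term bound uses only $|\alpha_i|\le 1$ and $\epsilon\le 1$. If a cleaner constant were desired one could instead induct on $r$, writing $\prod_{i=1}^r(\alpha_i-\epsilon)=\alpha_r\prod_{i<r}(\alpha_i-\epsilon)-\epsilon\prod_{i<r}(\alpha_i-\epsilon)$ and applying the inductive hypothesis to the first product, but the direct expansion seems to be the shortest route.
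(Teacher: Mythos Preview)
Your proof is correct and follows essentially the same approach as the paper: both expand $\prod_{i=1}^r(\alpha_i-\epsilon)$ over subsets of $[r]$, isolate the main term $\prod_i\alpha_i$, and bound the remaining terms using $0<\alpha_i<1$. The only cosmetic difference is that the paper groups terms by the power of $\epsilon$ to obtain $\delta=(1+\epsilon)^r-1$, whereas your cruder per-term bound gives $\delta=2^r\epsilon$; either constant suffices.
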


\begin{proof}
	As $\alpha_i<1$ the following holds
	\begin{align*}
	\prod_{i=1}^r(\alpha_i-\epsilon)=&\prod_{i=1}^r\alpha_i+\sum_{i=1}^r(-\epsilon)^i\sum_{I\subset[r],|I|=r-i}\prod_{j\in I}\alpha_j\\
	\geq&\prod_{i=1}^r\alpha_i-\sum_{i=1}^r\epsilon^i\sum_{I\subset[r],|I|=r-i}\prod_{j\in I}\alpha_j\\
	\geq&\prod_{i=1}^r\alpha_i-\sum_{i=1}^r\epsilon^i\binom{r}{i}\\
	=&\prod_{i=1}^r\alpha_i-\big((1+\epsilon)^r-1\big)
	\end{align*}
	Thus taking $\delta_{\ref{lem:minusEpsilonAndMult}}=\big((1+\epsilon)^r-1\big)$ gives the needed result.
\end{proof}

\section{The Main Lemma}\label{sec:mainLemma}

The algorithm for Theorem \ref{thm:algEx} gets a graph $G$ and uses the regularity lemma to get a partition graph $W$, and then solves the extremal question for $W$. To prove the correctness of the algorithm we need to show that indeed the answer for $W$ gives a good estimate for the original extremal question on the graph $G$.

\begin{lemma}\label{lem:mainLemma}
	Let $T$ be a fixed graph, $\FF$ a finite family of graphs, $\epsilon>0$ and $k \in \mathbb{N}$. Then there exists $\delta_{\ref{lem:mainLemma}}:=\delta_{\ref{lem:mainLemma}}(\epsilon,T,\FF)$ that tends to zero as $\epsilon$ tends to zero, for which the following holds.
	Let $G$ be a graph on $n$ vertices, let $P$ be an $f(k)$-partition of $G$ into $k$ parts as in Theorem \ref{thm:fRegPart} where $f(k)=\min\{\epsilon,k\cdot K_{\ref{thm:Ref}}^{-1}\}$ and let $W$ be the $(f(k),d_{\ref{lem:embedding}})$ partition graph it gives. Then 
	$$\ex(G,T,\FF)\leq (\frac{n}{k})^{t}\ex_{hom}(W,T,\FF)+\delta_{\ref{lem:mainLemma}} n^{t}$$
\end{lemma}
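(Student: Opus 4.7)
The plan is to fix an optimal $\FF$-free subgraph $G_0\subseteq G$ with $\N(G_0,T)=\ex(G,T,\FF)$, use Theorem~\ref{thm:Ref} to refine $P$ into a partition $P'$ which is $\epsilon'$-regular for $G_0$, form the associated $\Hhom$-free partition graph $W'$ via Lemma~\ref{lem:F-Hom-free}, and then project the information back down to a conventional $\Hhom$-free subgraph $W_0$ of $W$ whose $T$-count dominates $\N(W',T)$ up to the right scaling.

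First, apply Theorem~\ref{thm:Ref} to $G_0$ with $P$ as the initial partition and a parameter $\epsilon'>0$ chosen sufficiently small in terms of $\epsilon,T,\FF$. This yields a partition $P'=U_0\cup U_1\cup\cdots\cup U_{k'}$ with $k'\leq K_{\ref{thm:Ref}}(\epsilon',k)$, each $U_l\subseteq V_{i(l)}$ for some $V_{i(l)}\in P$, $|U_1|=\cdots=|U_{k'}|$, and $P'$ is $\epsilon'$-regular for $G_0$. The design $f(k)\leq k/K_{\ref{thm:Ref}}$ ensures $|U_l|/|V_{i(l)}|\geq f(k)$, so the $f(k)$-regularity of $P$ for $G$ controls the densities $d_G(U_l,U_m)$ within any $(V_i,V_j)$. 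Let $W'$ be the $(\epsilon',d_0)$-partition graph of $G_0$ with respect to $P'$, with $d_0$ chosen to meet the embedding thresholds of Lemma~\ref{lem:embedding}. Lemma~\ref{lem:F-Hom-free}(1) gives that $W'$ is $\Hhom$-free, and Lemma~\ref{lem:countingLemma} yields
\[
\N(G_0,T)\leq |U_l|^t\,\N(W',T)+\delta_1 n^t\leq (n/k')^t\N(W',T)+\delta_1 n^t.
\]

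Next, define a conventional subgraph $W_0$ of $W$ by setting $(v_i,v_j)\in E(W_0)$ exactly when $(v_i,v_j)\in E(W)$ and at least a fraction $\tau$ of the sub-pairs $(U_l,U_m)$ with $U_l\subseteq V_i,U_m\subseteq V_j$ satisfy $(v'_l,v'_m)\in E(W')$, where $\tau$ is chosen close to $1$, depending on $\max_{F\in\FF}|E(F)|$. To show $W_0$ is $\Hhom$-free, suppose $\varphi:F\to W_0$ is a homomorphism with $F\in\FF$, and pick $U_{l(f)}\subseteq V_{\varphi(f)}$ uniformly and independently at random for each $f\in V(F)$. By the union bound over the edges of $F$ and the choice of $\tau$, every edge lifts to an edge of $W'$ with positive probability, giving a homomorphism $F\to W'$ and contradicting the $\Hhom$-freeness of $W'$.

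Finally, relate $\N(W',T)$ to $\N(W_0,T)$ by splitting the sum defining $\N(W',T)$ according to the projection $\pi:v'_l\mapsto v_{i(l)}$. Copies with non-injective projection contribute at most $\delta_2(k')^t$ by Lemma~\ref{lem:TCopiesUsingSamePartTwice} applied to $W'$ with the partition induced by $\pi$; copies with injective projection onto a $W_0$-copy contribute at most $s^t\N(W_0,T)+\delta_3(k')^t$ with $s\approx k'/k$, using $w'(v'_l,v'_m)\leq w(v_{i(l)},v_{i(m)})+f(k)$ on projected edges (from the $f(k)$-regularity of $P$ for $G$) combined with Lemma~\ref{lem:minusEpsilonAndMult}. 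The remaining copies, those projecting to $W$-copies containing at least one edge outside $E(W_0)$, form the main technical obstacle: naively, the threshold $\tau$ must be close to $1$ to make the lifting argument work, yet small to bound these remaining copies, and reconciling this tension is the crux of the proof. It is resolved by combining the definition of $W_0$ (which limits the number of sub-pairs with $W'$-edges across a non-$W_0$ edge) with the edge-factor bound $w'\leq w+f(k)$ applied only along the distinguished bad coordinate, and summing absorbs the remainder into $\delta_{\ref{lem:mainLemma}}n^t$. Putting the pieces together, $\N(G_0,T)\leq (n/k)^t\N(W_0,T)+\delta_{\ref{lem:mainLemma}}n^t\leq (n/k)^t\ex_{hom}(W,T,\FF)+\delta_{\ref{lem:mainLemma}}n^t$, as required.
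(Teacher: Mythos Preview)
Your overall scaffold---refine $P$ to a partition $P'$ that is $\epsilon'$-regular for an optimal $\FF$-free subgraph $G_0$, form its $\Hhom$-free partition graph $W'$, then project back down to a conventional subgraph of $W$---matches the paper's. The gap is in the projection step.

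The threshold construction of $W_0$ cannot be made to work. For the random-lifting union bound to certify that $W_0$ is $\Hhom$-free you need $\tau>1-1/\max_{F\in\FF}|E(F)|$, a constant bounded away from $0$ depending only on $\FF$. But then a pair $(v_i,v_j)\in E(W)\setminus E(W_0)$ is only known to have \emph{fewer than} $\tau k_0^2$ sub-pairs that are $W'$-edges, which is essentially all of them. Your parenthetical ``which limits the number of sub-pairs with $W'$-edges across a non-$W_0$ edge'' is therefore the wrong way round: nothing prevents a constant fraction of $\N(W',T)$ from sitting over such edges, and the claimed absorption into $\delta_{\ref{lem:mainLemma}}n^t$ does not go through. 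For a concrete obstruction, arrange $G_0$ so that for some $W$-edge $(v_i,v_j)$ exactly half the sub-pairs have density $0$ in $G_0$ and half have density about $2w(v_i,v_j)$; then $(v_i,v_j)\notin E(W_0)$ for any $\tau>1/2$, yet the $W'$-copies over it carry the same total $T$-weight as if it were in $E(W_0)$.

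The paper sidesteps this tension with a different projection. Rather than a threshold, it selects (by an expectation argument over uniform independent choices $r_i\in[k_0]$) a single representative sub-part $V_{i,r_i}\subseteq V_i$ for each $i$, so that the induced subgraph $W^{**}$ of $W'$ on these $k$ vertices satisfies $k_0^t\,\N(W^{**},T)\geq \N(W',T)-\epsilon(kk_0)^t$. One then declares $(v_i,v_j)\in E(W_1)$ iff $(v_{i,r_i},v_{j,r_j})\in E(W^{**})$. Now $W_1$ and $W^{**}$ are isomorphic as unweighted graphs, so $\Hhom$-freeness transfers for free, and the weight comparison $w_1(v_i,v_j)\geq w^{**}(v_{i,r_i},v_{j,r_j})-\epsilon$ coming from the $f(k)$-regularity of $P$ gives $\N(W_1,T)\geq \N(W^{**},T)-\delta_{\ref{lem:minusEpsilonAndMult}}k^t$ directly via Lemma~\ref{lem:minusEpsilonAndMult}, with no ``remaining copies'' term to control at all.
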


\begin{proof}[Proof of Lemma \ref{lem:mainLemma}]
	Let $W_0$ be a conventional subgraph of $W$ such that $\N(W_0,T)=\ex_{hom}(W,T,\FF)$. Assume towards contradiction that $G$ has an $\FF$-free subgraph $G_{\ex}$ such that 
	
	\begin{equation}\label{eq:contradictionAssumption}
	\N(G_{\ex},T)> \N(W_0,T)(\frac{n}{k})^{t}+\delta_{\ref{lem:mainLemma}} n^{t}.
	\end{equation}
	where $\delta_{\ref{lem:mainLemma}}$ is chosen in the end and tends to zero as $\epsilon$ tends to zero.

	Using Theorem \ref{thm:Ref} 
	we can find a refinement of $P=\{V_i\}_{i=1}^k$ which is an $\epsilon$-regular partition of $G_{\ex}$ into $k_0\cdot k< K_{\ref{thm:Ref}}$ parts where $k_0\geq \frac{1}{\epsilon}$. 
	Call this partition $Q=V_0\cup\{\{V_{i,j}\}_{j=1}^{k_0}\}_{i=1}^{k}$, where $V_{i,j}\subset V_i$. Note that as $|V_0|<K_{\ref{thm:Ref}}$ and all of the other sets are of equal size, we get that 
	$$|V_{i,j}|> K^{-1}_{\ref{thm:Ref}}\cdot n=  k(K_{\ref{thm:Ref}})^{-1}\cdot \frac{n}{k} \geq kK^{-1}_{\ref{thm:Ref}}\cdot |V_i|,$$  
	and so as $P$ is an $f$-regular partition with $f(k)\leq  k K_{\ref{thm:Ref}}^{-1}$ it holds that
	\begin{equation}\label{eq:isARegSet}
	d_G(V_i,V_j)>d_G(V_{i,l_i},V_{j,l_j})-\epsilon
	\end{equation}
	for every choice of $1<l_i,l_j<k_0$ and $1<i,j<k$, where $d_G$ is the density in $G$. 
	
	Let $W^*$ be the $(\epsilon,d_{\ref{lem:embedding}})$ partition graph of $Q$ and $G_{\ex}$. Note that by Lemma \ref{lem:countingLemma} and by (\ref{eq:contradictionAssumption})  this means that 
	\[
	\left(\frac{n}{k\cdot k_0}\right)^t \N(W^*,T)\geq \N(G_{\ex},T)-\delta_{\ref{lem:countingLemma}} n^{t} \geq\left(\frac{n}{k}\right)^t\N(W_0,T)+(\delta_{\ref{lem:mainLemma}}-\delta_{\ref{lem:countingLemma}}) n^{t}
	\]
	and in particular
	\begin{equation}\label{eq:diffOfWeightedGraphs}
	\left(\frac{1}{k_0}\right)^t\N(W^*,T)\geq \N(W_0,T)+(\delta_{\ref{lem:mainLemma}}-\delta_{\ref{lem:countingLemma}})k^t.
	\end{equation}
	
	To obtain a contradiction to the existence of $G_{\ex}$ we first prove the following lemma
	
	\begin{lemma}\label{lem:choosingTheSubgraphForCont}
		There exists a choice of $r_i\in[k_0]$
		for every $i\in [k]$ such that the following holds. Let $W^{**}$ be the subgraph of $W^*$ spanned by the vertices $v_{1, r_1},...,v_{k,r_k}$ then
		$$
		k_0^t \N(W^{**},T)\geq \N(W^*,T)-\epsilon_{\ref{lem:TCopiesUsingSamePartTwice}} (k\cdot k_0)^t 
		$$
	\end{lemma}
	\begin{proof}
		Let $\mathcal{T}^*$ be the set of copies of $T$ in $W^*$ such that for every two vertices of the copy, $v_{i,k_i}$ and $v_{j,k_j}$, $i\ne j$.
		Using Lemma \ref{lem:TCopiesUsingSamePartTwice} we get that 
		\begin{align*}
		\sum_{T\in \T^*}\prod_{(v_i,v_j)\in E(T)} w^{*}(v_i,v_j)>\N(W^*,T)-\epsilon_{\ref{lem:TCopiesUsingSamePartTwice}} (kk_0)^t.
		\end{align*}
		
		For every $i$ let us choose $r_i$ uniformly at random, and let $W'$ be the subgraph spanned by the chosen vertices. For $T\in \T^*$ let $A(T)$ be the event that all of the vertices of $T$ are in $W'$. Then
		%
		\begin{align*}
		\E[k_0^t \N(W',T)]&=\E\big[k_0^t \sum_{T\in \T^{*}} \mathbf{1}_{A(T)} \prod_{v_i,v_j\in E(T)} w^*(v_i,v_j)\big]= \\
		&=k_0^t \sum_{T\in \T^{*}}\frac{1}{k_0^t}\prod_{v_i,v_j\in E(T)} w^*(v_i,v_j)>\N(W^{*},T)-\epsilon_{\ref{lem:TCopiesUsingSamePartTwice}} (k\cdot k_0)^t
		\end{align*}
		Thus, there must be some choice of $r_1,...,r_k$ that gives the needed inequality.
	\end{proof}
	
	Given the graph $W^{**}$, that is the choice of $r_i$ for every $i$, we can now find an $\FF_{hom}$-free conventional subgraph of $W$, say $W_1$, for which $\N(W_1,T)>\N(W_0,T)$. 
	%
	%
	Take $(v_i,v_j)$ to be an edge of $W_1$ if and only if $(v_{i,r_i},v_{j,r_j})$ is an edge of $W^{**}$.
	First note that by (\ref{eq:isARegSet}) and the fact that $G_{\ex}$ is a subgraph of $G$ we get that
	\begin{align}\label{eq:diffInDens}
	w_1(v_i,v_j)&=d_G(V_i,V_j)
	\geq d_{G}(V_{i,r_i},V_{j,r_j})-\epsilon \nonumber \\
	&\geq d_{G_{\ex}}(V_{i,r_i},V_{j,r_j})-\epsilon=w^{**}(v_{i,r_i},v_{j,r_j})-\epsilon.
	\end{align}
	
	Second, the mapping $\varphi:V(W^{**})\to V(W_1)$, $\varphi(v_{i,r_i})=v_i$ maps edges with non-negative weight to edges with non-negative weight. Thus as $W^{**}$ is $\Hhom$-free then so is $W_1$. Finally, note that if $\T(W_1)$ and $\T(W^{**})$ are the copies of $T$ in $W_1$ and $W^{**}$ respectively, then $\varphi$ is a bijection between them. Thus, the following holds
	
	\begin{align*}
	\N(W_1,T)&=\sum_{T\in \mathcal{T}(W_1)} \prod_{(v_i,v_j)\in E(T)}w_{1}(v_i,v_j)\\
	&=\sum_{T\in \mathcal{T}(W^{**})} \prod_{(v_{i,r_i},v_{j,r_j})\in E(T)}w_{1}\big(\varphi^{-1}(v_{i,r_i}),\varphi^{-1}(v_{j,r_j})\big)\\
	&\overset{(\ref{eq:diffInDens})}{\geq}\sum_{T\in \mathcal{T}(W^{**})} \prod_{(v_{i,r_i},v_{j,r_j})\in E(T)}(w^{**}(v_{i,r_i},v_{j,r_j})-\epsilon)\\
	&\overset{\ref{lem:minusEpsilonAndMult}}{\geq} \N(W^{**},T)-\delta_{\ref{lem:minusEpsilonAndMult}}k^t\\ 
	&\overset{\ref{lem:choosingTheSubgraphForCont}}{>}(\frac{1}{k_0})^t \N(W^{*},T)-\epsilon_{\ref{lem:TCopiesUsingSamePartTwice}}k^t-\delta_{\ref{lem:minusEpsilonAndMult}}k^t\\
	&\overset{(\ref{eq:diffOfWeightedGraphs})}{\geq} \N(W_0,T)+(\delta_{\ref{lem:mainLemma}}-\delta_{\ref{lem:countingLemma}}-\epsilon_{\ref{lem:TCopiesUsingSamePartTwice}}-\delta_{\ref{lem:minusEpsilonAndMult}}) k^t
	>\N(W_0,T)
	\end{align*}
	The last inequality holds since $\delta_{\ref{lem:mainLemma}}$ can be chosen so that $\delta_{\ref{lem:mainLemma}}>\delta_{\ref{lem:countingLemma}}+\epsilon_{\ref{lem:TCopiesUsingSamePartTwice}}+\delta_{\ref{lem:minusEpsilonAndMult}}$.
	
	The existence of such $W_1$ is the needed contradiction, as we chose $W_0$ to be a conventional $\Hhom$-free subgraph of $W$ that has the maximum possible value of $\N(W_0,T)$. Thus there cannot be a graph $G_{\ex}$ which is $\FF$-free and has more than $ (\frac{n}{k})^{t}\ex_{hom}(W,T,\FF)+\delta_{\ref{lem:mainLemma}} n^{t}$ copies of $T$.
\end{proof}

\section{Proof of Theorem \ref{thm:algEx}}\label{sec:ProofOfTheorem14}


The algorithm itself is rather straightforward. Let $n_0=m_{\ref{lem:embedding}}K_{\ref{thm:fRegPart}}$, if $V(G)<n_0$ we can use brute-force as there is a constant number of options to check. If $V(G)>n_0$ then for $\epsilon_1 >0$ Theorem \ref{thm:fRegPart} gives us an efficient algorithm for finding $\epsilon_1 n^2$ edges to add or remove from $G$ and to find a $k\cdot K_{\ref{thm:Ref}}^{-1}$-regular partition $P=V_0\cup\{V_i\}_{i=1}^k$ of the edited graph, where $K_{\ref{thm:fRegPart}}> k>\frac{1}{\epsilon_1}$.

In Theorem \ref{thm:fRegPart} we change $\epsilon_{1} n^2$ edges, and as $|V_0|<K_{\ref{thm:fRegPart}}$ we can remove all edges using a vertex from $V_0$, and change at most $\epsilon_{1} n^2+K_{\ref{thm:fRegPart}}n<\epsilon_2 n^2$ edges of the original graph $G$. This will give us a graph that has an $f(k)$-partition of its vertices into $k$ parts of equal size. Call this graph $G^*$.
 
By lemma \ref{lem:removingSmallNumOfCopiesOfT} if we prove that we found the approximated answer for $G^*$, up to an additive error of $\epsilon_3n^t$, then we have also found an answer for $G$ itself up to an additive error term of $(\epsilon_2+\epsilon_3)n^t$. Thus we may focus on the graph $G^*$, remembering that
$$
|\ex(G,T,\FF)-\ex(G^*,T,\FF)|\leq (\epsilon_2+\epsilon_3)n^t
$$

Let $W$ be the $(\epsilon_1,d_{\ref{lem:embedding}})$ partition graph of $P$ and $G^*$. We use brute-force to find a conventional subgraph of it, say $W_0$, which is $\Hhom$-free and maximizes $\N(W_0,T)$. As $W$ has a constant number of vertices, we can do this in constant time (not depending on the size of $G$). By Lemma \ref{lem:F-Hom-free} $G^*_{W_0}$ is $\FF$-free, and so 
$$
\N(G^*_{W_0},T)\leq \ex(G^{*},T,\FF)
$$
To prove that $G^*_{W_0}$ gives the needed approximation, it is left to show that $$\N(G^*_{W_0},T)\geq \ex(G^{*},T,\FF)-\epsilon n^{t}.$$


Indeed Lemma \ref{lem:countingLemma} shows that $$|\N(G^*_{W_0},T)-(\frac{n}{k})^t\N(W_0,T)|<\delta_{\ref{lem:countingLemma}} n^{t}.$$ By Lemma \ref{lem:mainLemma} there is no $\FF$-free subgraph of $G^*$ that has more than $(\frac{n}{k})^t\N(W_0,T)+\delta_{\ref{lem:mainLemma}}n^t$ 
copies of $T$, thus $$|\ex(G^*,T,\FF)-(\frac{n}{k})^t\N(W_0,T)|<\delta_{\ref{lem:mainLemma}}n^t.$$



Thus we get that $|\N(G^*_{W_0},T)-\ex(G,T,\FF)|<(\epsilon_2+\epsilon_3+\delta_{\ref{lem:mainLemma}}+\delta_{\ref{lem:countingLemma}})n^t$, and as $\epsilon_2,\epsilon_3,\delta_{\ref{lem:mainLemma}}$ and $\delta_{\ref{lem:countingLemma}}$ all tend to zero as $\epsilon$ tends to zero, their sum can be made as small as needed. 
\qed

\section{Proof of Theorem \ref{thm:kmBigDiff} }\label{sec:ProofOfTheorem17}
	We show that the question of approximating $\ex(G,K_2,K_3)$ up to additive error $n^{2-\epsilon'}$ for any $\epsilon'>0$  can be reduced to the question of approximating $\ex(G,K_m,K_{k})$ up to additive error of $n^{m-\epsilon}$ for any $\epsilon>0$. As the former is NP-hard, as shown in \cite{ASS}, and this will show that so is the latter.
	
	To reduce one question to the other we do the following. Given a graph $G$ with vertex set $V$, add to it $r=k-3$ independent sets each of size $cn$, $U_1,...,U_{r}$, where $c$ is a large constant. Connect the vertices in a set $U_i$ to all vertices in sets $U_j$ with $i\ne j$ and to $V$. Call the new graph $G^+$, the new edges \textit{outer edges}, and the edges spanned by $V$ \textit{inner edges}.  
	
	Let $G^{+}_{ex}$ be a $K_k$-free subgraph of $G^{+}$ that has $\ex(G^+,K_m,K_k)$ copies of $K_m$. We show that the subgraph of $G^{+}_{ex}$ spanned by $V$ is  $K_3$-free, the set of edges removed from $G^{+}$ to get $G^+_{ex}$, say $E$, contains only inner edges  and it is of minimal possible size. In particular, this will show that $|E|=e(G)-\ex(G,K_2,K_3)$.
	
%
	Focusing on the graph $G^{+}_{ex}$ assume towards contradiction that there are copies of $K_{3}$ spanned by $V$.
	Note that if there is a copy of $K_{r}$ using vertices from $U_1,..,U_r$ then  $G^+_{ex}$ must miss at least one outer edge from the copy of $K_{r}$ to every copy of $K_3$ in $V$, as the graph is $K_k$-free.
	
	We first prove a lower bound on the number of copies of $K_m$ in $G^{+}$ using those missing outer edges, and then we show that if we restore all of the outer edges to $G_{ex}^+$ and make $V$ span a $K_3$-free graph then we would gain copies of $K_m$, in contradiction to the definition of $G_{ex}^{+}$. From this we can conclude that indeed $G_{ex}^+$ has all the outer edges, and that the subgraph of $G_{ex}^+$ spanned by $V$ is $K_3$-free.

	%
	
	\textbf{Removing outer edges, case 1:} Assume first that there is no set of $\frac{c}{2}n$ vertex disjoint copies of $K_{r}$ in $U_1,...,U_{r}$. If this is the case the number of edges between $U_1,...,U_{r}$ missing from $G_{ex}^+$ is at least $(\frac{cn}{2})^2$. 
	
	Indeed, take a maximal set of vertex disjoint copies of $K_{r}$, this set is of size at most $\frac{c}{2}n-1$. Let us look at all of the vertices not taking part in it. There are at least $\frac{c}{2}n+1$ such vertices in each $U_i$, call them $U'_i$, and let $e$ be the number of non-edges between them. 
	
	Choose randomly a single vertex from each $U'_i$. In the graph spanned by these vertices there must be at least one non-edge, because otherwise we would find another copy of $K_{r}$ in contradiction to the maximality of the set. Thus the expected number of non edges in the graph spanned by the vertices must be at least 1. This gives the following inequality and the needed result:
	\[
	\binom{r}{2}\frac{e}{\binom{r}{2}(\frac{cn}{2}+1)^2}\geq 1\Rightarrow e>\left(\frac{cn}{2}\right) ^2
	\]
	
	For each edge between the sets $U_i$ and $U_{i'}$ missing from $G_{ex}^+$, a choice of a single vertex from $m-3$ sets $U_j$ the edge does not take part in and a single vertex from $V$, spans together with this edge a copy of $K_m$ in $G^{+}$. There are $\binom{r-2}{m-3}(cn)^{m-3}n$ ways to make these choices, but going over all the missing edges it might be that we count each copy of $K_m$ several times, but certainly not more than $\binom{m-1}{2}$. Thus the number of copies of $K_m$ missing is at least:
	
	\begin{equation}\label{eq:removingOuter1}
	\binom{m-1}{2}^{-1}\left(\frac{cn}{2}\right) ^2\cdot\binom{r-2}{m-3} (cn)^{m-3}n= n^m c^{m-1} \left(\binom{r-2}{m-3}\binom{m-1}{2}^{-1}\frac{1}{4}\right)
	\end{equation}
	%
	%

	\textbf{Removing outer edges, case 2:} If there is a set of $\frac{c}{2}n$ vertex disjoint copies of $K_{r}$ in $U_1,...,U_r$, then in $G_{ex}^+$ we must miss at least one edge between each copy of $K_r$ and a vertex covering of $K_3$ in $V$. Let  $b$ be the size of a minimal vertex covering, then we miss at least $b\frac{cn}{2}$ edges. For each such edge if we choose a single vertex from $m-2$ sets $U_i$ it does not take part in, together they will span a copy of $K_m$. Thus, the number of copies of $K_m$ missing from $G_{ex}^{+}$, taking into account possible double counting, is at least
	\begin{equation}\label{eq:removingOuter2}
	(m-1)^{-1} b\frac{cn}{2}\binom{r-1}{m-2}(cn)^{m-2}=bn^{m-1}c^{m-1}\binom{r-1}{m-2}(2(m-1))^{-1}.
	\end{equation}
	
	Now let us compare the above to the option of removing from $G_{ex}^+$ all of the inner edges using vertices from a minimal vertex covering of the copies of $K_3$ in $V$ and restoring all of the outer edges.
	
	\textbf{Removing inner edges:}
	The number of inner edges removed from the graph is at most the edges connected to a minimal vertex covering of $K_3$ in $G$, that is, at most $bn$ edges. Even if for each such edge any choice of another $m-2$ vertices from the graph $G^{+}$ spans a copy of $K_m$, the number of copies of $K_m$ deleted when we remove these inner edges is at most:

	\begin{equation}\label{eq:removingInner}
	\binom{rcn+n}{m-2}bn<bn^{m-1}\left(rc+1\right)^{m-2}.
	\end{equation}
	
	
	Choosing the constant $c$ to be large enough and as $b<n$ we can ensure that (\ref{eq:removingInner})$<$(\ref{eq:removingOuter1}),(\ref{eq:removingOuter2}). Thus we get that indeed in $G_{ex}^{+}$ the subgraph spanned by $V$  is $K_{3}$-free. 
	
	
	It is left to show that the number of inner edges removed is  the minimum possible, thus solving $\ex(G,K_2,K_3)$. 
	Indeed, as the subgraph of $G_{ex}^+$ spanned by $V$ is $K_3$-free, all copies of $K_m$ using 3 or more vertices from $V$ are missing from $G_{ex}^+$. 
	As only inner edges are missing, all of the copies using 1 or less vertices from $V$ are kept in $G_{ex}^+$. As for copies of $K_m$ using exactly 2 vertices of $V$, for every edge missing from  $G_{ex}^+$ the number of copies of $K_m$ missing is exactly
	\[
	\binom{r}{m-2}(cn)^{m-2},
	\] 
	and copies corresponding to distinct edges are also distinct.
Thus, as the minimum possible number of copies of $K_m$ is missing from $G_{ex}^{+}$, it must be that the minimum possible number of edges is missing from $G$.

For integers $r,l$ and a graph $G$ let $\overline{\ex}(G,K_r,K_l)=\N(G,K_r)-\ex(G,K_r,K_l)$ where $\N(G,K_r)$ is the number of copies of $K_r$ in $G$. Note that the total number of copies of $K_r$ in $G$ can be found in polynomial time, and so if we can approximate $\overline{\ex}(G,K_r,K_l)$ up to an additive error in polynomial time, we can also approximate ${\ex}(G,K_r,K_l)$ in polynomial time up to the same additive error.

The above claim about $G^+$ can be restated as follows. Let $g$ be the number of copies of $K_m$ in $G^{+}$ using at least $3$ vertices from $V$, then 
\begin{align*}
\overline{\ex}(G^{+},K_m,K_k)&=\binom{r}{m-2}(cn)^{m-2}\overline{\ex}(G,K_2,K_3)+g\\
&\Downarrow\\
 \overline{\ex}(G,K_2,K_3)&=\frac{\overline{\ex}(G^{+},K_m,K_k)-g}{\binom{r}{m-2}(cn)^{m-2}}.
\end{align*}

As $g$ and $|E(G)|$ can be calculated in polynomial time, if we approximate $\overline{\ex}(G^{+},K_m,K_k)$ up to an additive error of $ n^{m-\epsilon}$ then we can also approximate $\ex(G,K_2,K_3)=|E(G)|- \overline{\ex}(G,K_2,K_3)$ up to an additive error of 
$$
n^{m-\epsilon}\Big(\binom{r}{m-2}(cn)^{m-2}\Big)^{-1}\leq n^{2-\epsilon'}
$$
for some $\epsilon':=\epsilon'(\epsilon)>0$ and this is known to be NP-hard. 
\qed
%
%
%
%

\section{Concluding remarks and open questions}\label{sec:concludingRemarks}

\subsection{Improving on Theorem \ref{thm:kmBigDiff}}

In the introduction we have posed the following conjecture,

\noindent \textbf{Conjecture \ref{conj:mainConj}}	\textit{
	For every graph $T$, family of graphs $\FF$ such that no $F\in \FF$ is a subgraph of a blow-up of $T$, and $\epsilon>0$, it is NP-hard to approximate $\ex(G,T,\FF)$ up to additive error  of $n^{v(T)-\epsilon}$ for a given input graph $G$ on $n$ vertices.}

Allowing a  smaller additive error, we can get the above for a large family of graphs.

\begin{proposition}\label{prop:toSmallofAnError}
	For any $\epsilon>0$, 3-connected graph $T$ on $t$ vertices and a family of 3-connected graphs $\FF$ such that no $F\in \FF$ is a subgraph of a blow-up of  $T$, it is NP-hard to approximate $\ex(G,T,\FF)$ up to additive error of $n^{t-2-\epsilon}$. 
\end{proposition}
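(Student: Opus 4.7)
\emph{Proof Proposal.} My plan is to adapt the reduction of Theorem \ref{thm:kmBigDiff} to the general 3-connected setting. The key observation is that the allowed error $n^{t-2-\epsilon}$ is much finer than the natural $n^{t-\epsilon}$ scale, so a hypothetical polynomial-time approximation at this precision would recover the auxiliary quantity $\ex(G,K_2,K_3)$ \emph{exactly}, which is NP-hard (a consequence of Theorem \ref{thm:ASS2}). Given $G$ on $n$ vertices, I construct $G^+$ exactly as in Theorem \ref{thm:kmBigDiff}: retain $V(G)$ with its original edges (\emph{inner edges}), adjoin $r$ independent outer sets $U_1,\dots,U_r$ of size $cn$ each (for $c$ a large constant and $r=r(T,\FF)$ chosen below), and insert all complete bipartite graphs between distinct parts (\emph{outer edges}).

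Counting $T$-copies using the 3-connectedness of $T$, and choosing $r$ so that $V(T)\setminus\{a,b\}$ admits a partition into $r$ independent sets of $T$ for at least one edge $(a,b)$ of $T$, one obtains
$$\N(G^+,T)=p(T,r)\,e(G)\,(cn)^{t-2}+g_T(G),$$
where $p(T,r)>0$ is a combinatorial constant counting embeddings of $T$ with two vertices on an edge of $G$ and $t-2$ vertices distributed among the outer classes, and $g_T(G)$ gathers the contribution from all $T$-copies using three or more inner vertices; both terms are polynomial-time computable from $G$.

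The key technical step is to select $r$ (depending on the whole family $\FF$) so that for every $F\in\FF$, any copy of $F$ in $G^+$ forces a triangle inside $V(G)$. An embedding of $F$ in $G^+$ corresponds to a partition $V(F)=A\cup B_1\cup\dots\cup B_r$ with each $B_i$ independent in $F$ and $F[A]$ embeddable in $G[V(G)]$. Partitions with $|A|\leq 2$ would yield an embedding of $F$ into a blow-up of $T$, contradicting the hypothesis that no $F\in\FF$ is such a subgraph. The 3-connectedness of $F$, combined with a sharp choice of $r$ just below $\min_{\{f_1,f_2\}}\chi(F-\{f_1,f_2\})$ and at least $\min_{K_3\subseteq F}\chi(F-K_3)$, forces any admissible partition to have $F[A]$ containing a triangle of $F$. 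Hence $V(G)$ triangle-free implies $G^+$ is $\FF$-free, and the case analysis of \emph{outer vs.\ inner edge removal} from Theorem \ref{thm:kmBigDiff} (each outer edge lies in $\Omega(n^{t-1})$ copies of $T$, each inner edge in only $\Theta(n^{t-2})$) shows the optimal strategy is to delete inner edges until $V(G)$ is triangle-free. This yields
$$\overline{\ex}(G^+,T,\FF)=p(T,r)(cn)^{t-2}\bigl(e(G)-\ex(G,K_2,K_3)\bigr)+g_T(G).$$
Consequently, a polynomial-time approximation of $\ex(G^+,T,\FF)$ up to additive error $n^{t-2-\epsilon}$ would determine $\ex(G,K_2,K_3)$ up to additive error $O(n^{-\epsilon})<1$, i.e.\ exactly, contradicting the NP-hardness of the latter.

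The main difficulty lies in the structural step: proving, from the 3-connectedness of each $F\in\FF$ and the non-homomorphism hypothesis, that $F$ cannot be embedded in $G^+$ without using a triangle of $V(G)$. This is where both the connectivity assumption on $F$ and the precise form of the blow-up hypothesis must be combined to make the choice of $r$ uniform over the family $\FF$, and is also the reason the argument does not immediately extend to the full Conjecture \ref{conj:mainConj}.
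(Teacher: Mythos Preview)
Your reduction has a genuine gap: the outer structure you attach is a blow-up of $K_{r}$ (joined completely to $V(G)$), not a blow-up of $T$, so the hypothesis ``no $F\in\FF$ is a subgraph of a blow-up of $T$'' cannot be invoked as you claim. Concretely, your two requirements on $r$ are incompatible in general. For copies of $T$ with exactly two inner vertices to exist you need $r\ge \chi(T-\{a,b\})$ for some edge $ab$ of $T$; to forbid copies of $F$ with $|A|\le 2$ you need $r<\chi(F-\{f_1,f_2\})$ for every pair $f_1,f_2$. Take $T=Q_3$ (the $3$-cube, which is $3$-connected and bipartite) and $\FF=\{K_4\}$. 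Then $K_4$ is not a subgraph of any blow-up of $Q_3$ since such blow-ups are bipartite, so the hypotheses of the proposition hold. But $\chi(Q_3-\{a,b\})=2$ for every edge $ab$ (the remaining graph is a $6$-cycle), forcing $r\ge 2$, while $\chi(K_4-\{f_1,f_2\})=\chi(K_2)=2$, forcing $r<2$. No $r$ works. Indeed with $r=2$ any edge of $G$ together with one vertex from each outer class already gives a $K_4$ in $G^+$, so $G^+$ is never $K_4$-free regardless of whether $G$ is triangle-free. Your sentence ``partitions with $|A|\le 2$ would yield an embedding of $F$ into a blow-up of $T$'' is exactly where the argument breaks.

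The paper proceeds quite differently. It reduces from the exact computation of $\ex(G,K_2,\FF)$ (not from $\ex(G,K_2,K_3)$), and for each edge $e$ of $G$ it attaches a \emph{separate} gadget which is a blow-up of $T\setminus\{u,v\}$ with classes of size $n^c$, wired to the endpoints of $e$ so that $e$ together with its gadget forms a blow-up of $T$. The $3$-connectedness of $T$ and of every $F\in\FF$ is then used in a clean way: since each gadget meets the rest of $G^+$ in only two vertices, any copy of $T$ or of $F$ lies entirely inside $G$ or entirely inside one gadget together with its two attachment vertices. In the latter case it sits inside a blow-up of $T$, so no $F$ can occur there, and every edge of $G$ acquires the same number $X_T\ge n^{c(t-2)}$ of external $T$-copies. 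This makes the accounting exact: $\overline{\ex}(G^+,T,\FF)=X_T\cdot\overline{\ex}(G,K_2,\FF)$ up to an $O(n^t)$ term from internal copies, and since $N=|V(G^+)|\approx n^{c+2}$, an additive $N^{t-2-\epsilon}$ approximation recovers $\ex(G,K_2,\FF)$ exactly. The point is that the gadgets carry the structure of $T$ itself, which is what lets the blow-up hypothesis do work; your global $K_r$-blow-up discards that structure.
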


\begin{proof}
	We show that the question of calculating $\ex(G,K_2,\FF)$ exactly can be reduced to the question of approximating $\ex(G,T,\FF)$ up to additive error of $n^{c(t-2)/(c+2)-\epsilon'}$ where $c$ is a constant as large as needed and $0<\epsilon'<\epsilon$. As every $F\in \FF$ is not a subgraph of a blow-up of $T$, in particular it is not bipartite, and thus the former is known to be NP-hard by Theorem \ref{thm:ASS2}. As $c$ can be as large as we want it can be chosen so that $c(t-2)/(c+2)-\epsilon'>t-2-\epsilon$.

	Given a graph $G$ on $n$ vertices, let us construct $G^{+}$ as follows. Let $T'$ be a subgraph of $T$ obtained by removing two arbitrary vertices connected by an edge, say $\{u,v\}$. For each edge $e\in G$ add $t-2$ independent sets of size $n^{c}$ to the graph. Connect the appropriate pairs of sets by complete bipartite graphs to create a blow-up of $T'$, and then connect   sets corresponding to neighbors of $u$ and $v$ in $T$ to the two endpoints of $e$.   
	Call the new copies of $T$ created \textit{external} and the copies spanned by $G$ \textit{internal}.
	
	%
	Note that $N:=v(G^{+})\leq n+(t-2)n^{c}n^2$ and $N^{c(t-2)/(c+2)-\epsilon'}<\frac{1}{5}n^{c(t-2)}$. 
	Furthermore, in $G^{+}$ every edge of $G$ takes part in a fixed number of  external copies of $T$ in $G^+$, say $ X_T\geq n^{c(t-2)}$ and in at most $O(n^{t-2})$ internal copies. In addition, no new copies of  graphs from $\FF$ are created. This is true as $T$ and the graphs in $\FF$ are 3-connected and no graph in $\FF$ is a subgraph of a blow-up of $T$.
	
	Let $\overline{\ex}(G^{+},T,\FF)=\N(G^+,T)-\ex(G^+,T,\FF)$ where $\N(G^+,T)$ is the number of copies of $T$ in $G^+$, and similarly $\overline{\ex}(G,K_2,\FF)=|E(G)|-\ex(G,K_2,\FF)$. First note that
	\begin{align*}
		\overline{\ex}(G^{+},T,\FF)&\leq \overline{\ex}(G,K_2,\FF)X_T+O(n^{t}).
	\end{align*} 
	Indeed, by deleting $\overline{\ex}(G,K_2,\FF)$ edges from $G$ we can make $G^{+}$ into an $\FF$-free graph, as all of the copies of $\FF$ are spanned by the vertices coming from $G$. Removing these edges will remove all of the external copies of $T$ using them together with some internal copies. 
	
	Furthermore, if we removed $\overline{\ex}(G^{+},T,\FF)$ copies of $T$ from $G^{+}$ and made it $\FF$-free, we may assume that we have done this by removing only edges from $G$, say $e$ of them, and that $G$ was made $\FF$-free. Each edge of $G$ takes part in at least $X_T$ distinct copies of $T$, so
	$$
	\overline{\ex}(G^{+},T,\FF)\geq e X_T
	\geq \overline{\ex}(G,K_2,\FF)X_T.
	$$
	
	From the above
	
	\[
	\frac{\overline{\ex}(G^{+},T,\FF)-O(n^t)}{X_T}\leq \overline{\ex}(G,K_2,\FF) \leq \frac{\overline{\ex}(G^{+},T,\FF)}{X_T},
	\]	and as $$\frac{N^{c(t-2)/(c+2)-\epsilon'}}{X_T}\leq \frac{1}{5} \mbox{ and } \frac{O(n^t)}{X_t}<\frac{1}{5},$$
	if we calculated $\overline{\ex}(G^{+},T,\FF)$ up to an additive error of $N^{c(t-2)/(c+2)-\epsilon'}$ then we calculated $\overline{\ex}(G,K_2,\FF)$ up to an additive error of $2/5$, as this is an integer this means we have calculated it exactly and this is known to be NP-hard.
\end{proof}

The full assertion of Conjecture \ref{conj:mainConj} remains open. The following questions address several special cases.

\begin{enumerate}
\item Is approximating $\ex(G,K_m,K_{m+1})$ up to additive error of $n^{m-\epsilon}$ NP-hard for any integer $ m\geq 2$ and any $\epsilon>0$? The case $m=2$ is proved in \cite{ASS} and we can also prove it for $m=3$.
\item Given a family of graphs $\FF$ such that  $\chi(F)\geq m+2$ for every $F\in \FF$,
is it NP-hard to approximate $\ex(G,K_m,\FF)$ up to additive error of $n^{m-\epsilon}$ for every $\epsilon>0$? Theorem \ref{thm:kmBigDiff} is of course a special case of this.
 
\end{enumerate}

%
%
%
%

\subsection{Calculating $\ex(G,T,F)$ exactly}
Proposition \ref{prop:toSmallofAnError} implies that for many graphs $T$ and $F$ there is no efficient algorithm that calculates $\ex(G,T,F)$ exactly. Nevertheless, for some special cases this calculation is possible in polynomial time.
We mention two simple examples.

\begin{proposition}
	For a graph $G$ on $n$ vertices the following can be solved in polynomial time
	\begin{enumerate}
		\item $\ex(G,kK_2,K_{1,2})$, where $kK_2$ is a matching of size $k\geq1$.
		\item $\ex(G,K_2,K_{1,{t+1}})$.
	\end{enumerate}
\end{proposition}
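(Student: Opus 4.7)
My plan is to observe that the two forbidden graphs $K_{1,2}$ and $K_{1,t+1}$ impose very simple degree constraints on any $\FF$-free subgraph, which reduces each problem to a classical optimization task solvable in polynomial time.

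First I would note the structural characterizations. A graph is $K_{1,2}$-free if and only if every vertex has degree at most $1$, i.e. the graph is a (possibly empty) matching. A graph is $K_{1,t+1}$-free if and only if its maximum degree is at most $t$. Both assertions are immediate from the fact that a copy of $K_{1,s}$ appears as a subgraph exactly when some vertex has $s$ neighbors.

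For part (1), the above observation means we must choose a matching $M \subseteq E(G)$ maximizing the number of sub-matchings of size $k$ it contains. But if $|M|=m$ then the number of $k$-matchings inside $M$ is exactly $\binom{m}{k}$, and this quantity is nondecreasing in $m$. Therefore
\[
\ex(G,kK_2,K_{1,2}) \;=\; \binom{\mu(G)}{k},
\]
where $\mu(G)$ denotes the size of a maximum matching in $G$. Since a maximum matching in a general graph can be computed in polynomial time by Edmonds' blossom algorithm, the result follows.

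For part (2), we must find the maximum number of edges in a spanning subgraph of $G$ whose maximum degree is at most $t$. This is the classical \emph{simple $b$-matching} (or degree-constrained subgraph) problem with uniform bound $b_v=t$ for every vertex $v$, which is well known to be solvable in polynomial time, either via a direct polynomial-time algorithm (Gabow) or by a gadget reduction to ordinary maximum matching. There is no real obstacle in either part; the only thing to verify carefully is the monotonicity of $\binom{m}{k}$ in $m$ and the standard polynomiality of these matching variants.
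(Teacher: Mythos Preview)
Your proposal is correct and follows essentially the same approach as the paper: for part (1) you reduce to computing a maximum matching via Edmonds' algorithm, and for part (2) you identify the problem as a degree-constrained subgraph (simple $b$-matching) problem. The only difference is one of detail: where you cite the polynomiality of $b$-matching as known (mentioning the gadget reduction to ordinary matching as one option), the paper actually carries out that Tutte-style gadget construction explicitly---replacing each vertex $v$ by $d(v)$ copies, adding an auxiliary set of size $d(v)-t$, and arguing that a maximum matching in the resulting graph yields the optimal degree-$t$-bounded subgraph of $G$.
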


\begin{proof}

		Part 1 is trivial. It is known since Edmonds \cite{E} that given an input graph $G$ a matching of maximum size can be found in polynomial time. As any $K_{1,2}$-free subgraph of $G$ is a matching it is clear that maximizing the largest matching also maximizes the number of copies of $kK_2$.
		
		The proof of part 2 follows the idea of the proof of the f-factor theorem of Tutte \cite{T}. Given a graph $G$, we may assume that it has no isolated vertices. First, replace each vertex $v\in V(G)$ with an independent set of size $d(v)$, say $V(v)=\{v_1,...,v_{d(v)}\}$, and for every edge $e=\{u,v\}$ of $G$ choose arbitrarily vertices $v_i,u_j$ and connect them, making sure that at the end of the processes each vertex of the new graph is of degree exactly one. Note that there is a one-to-one correspondence between edges in the new graph and edges in $G$.
		
		Second, for every independent set $V(v)$ corresponding to a vertex $v\in V(G)$ such that $d(v)\geq t+1$, add a new independent set $U(v)$ of  size $d(v)-t$. Connect all of the vertices in $V(v)$ to all the vertices in $U(v)$.  Call the new graph obtained by this processes $G^*$.
		
		By \cite{E} there is a polynomial time algorithm that finds a maximum matching in $G^*$, call this maximum matching $M$. Note that we may assume that $M$  saturates all  the vertices in the sets $U(v)$. Indeed, if some $w\in U(v)$ is not saturated we may add to $M$ an edge between $w$ and some vertex of $V(v)$, say $v_1$, that is not adjacent to $U(v)$ in $M$ and if prior to this addition there was an edge in $M$ adjacent to $v_1$, delete it. A vertex $v_1$ must exist as $V(v)>U(v)$ and the replacement of edges does not make the size of $M$ smaller. 
		
		Each edge in $M$ that is not adjacent to some $U(v)$ corresponds to an edge of $G$. Let us keep in $G$ only these edges and call the new graph $G'$. For every vertex $v$ the set $U(v)$ was saturated in $M$ and so the number of edges adjacent to $V(v)$ and a vertex not in $U(v)$ is at most $|V(v)|-|U(v)|=d_G(v)-(d_G(v)-t)=t$. Thus the new graph $G'$ is a subgraph of $G$ with maximum degree at most $t$. The number of edges in $G'$ is exactly the number of edges in $M$ that are not adjacent to some $U(v)$, that is $e(G')=|M|-\sum_{v\in V(G)}(d_G(v)-t)$.
		
		It is left to show that $G'$ has the maximum possible number of edges. Indeed, assume that $G^{''}$ is a subgraph of $G$ with maximum degree at most $t$ which has more edges than $G'$. Looking at $G^*$ take into a matching $M'$ each edge corresponding to an edge in $G''$. This results in a matching as each $v_i\in V(v)$ has exactly one neighbor outside of $U(v)$. In addition, for each $v\in V(G)$ choose $d_G(v)-t$ edges between $U(v)$ and $V(v)$ to add to $M'$ while keeping it a matching. As the maximum degree in $G''$ is at most $t$ there will be $|U(v)|=d_G(v)-t$ unsaturated vertices in $V(v)$ to connect to $U(v)$. The resulting matching will be of size $e(G'')+\sum_{v\in V}(d_{G}(v)-t)>|M|$,  in contradiction to the maximality of $M$.
		
		Thus we get that $\ex(G,K_2,K_{1,t+1})=e(G')=|M|-\sum_{v\in V(G)}(d_G(v)-t)=|M|-\frac{1}{2}e(G)+v(G)t$, and we can find $|M|, e(G)$, and $v(G)$ in polynomial time.
		

\end{proof}

It will be interesting to characterize all pairs of graphs $T$ and $F$ for which $\ex(G,T,F)$ can be calculated exactly in polynomial time, for a given input graph $G$.

\end{document}